\newtheorem{theorem}{Theorem}[section]
\newtheorem{lemma}[theorem]{Lemma}
\newtheorem{prop}[theorem]{Proposition}
\newtheorem{corollary}[theorem]{Corollary}
\newtheorem{conjecture}[theorem]{Conjecture}
\newenvironment{repthm}[1]
  {\innercustomthm}
  {\endinnercustomthm}
\newenvironment{repprop}[1]
  {\innercustomprop}
  {\endinnercustomthm}
\theoremstyle{remark}
\newtheorem*{remark}{Remark}
\theoremstyle{definition}
\newtheorem{definition}[theorem]{Definition}
\newenvironment{repdef}[1]
  {\innercustomdef}
  {\endinnercustomthm}
\title{\textbf{The Cactus Group Property for Ordinal Sums of Disjoint Unions of Chains}}
\author{Son Nguyen}
\affil{University of Minnesota - Twin Cities}
\date{}
\begin{document}
	
    \maketitle

    \begin{abstract}
    \justifying
        We study the action of Bender-Knuth involutions on linear extensions of posets and identify LE-cactus posets, i.e. those for which the cactus relations hold. It was conjectured in \cite{chiang2023bender} that d-complete posets are LE-cactus. Among the non-d-complete posets that are LE-cactus, one notable family is ordinal sums of antichains. In this paper, we characterize the LE-cactus posets in a more general family, namely ordinal sums of disjoint unions of chains.
    \end{abstract}

    \tableofcontents

\section{Introduction}\label{sec:intro}

    \justifying
    First introduced by Bender and Knuth in their study of enumerations of plane partitions and Schur polynomials \cite{benderknuth1972}, the {\it Bender--Knuth (BK) involutions}, a certain family of involutions on the set of column-strict (semi-standard) tableaux, have seen a wide range of applications across different areas of combinatorics. A classic application of BK involutions is on column-strict tableaux where they are used to prove that Schur polynomials are symmetric. Informally, the BK involutions $t_i$ act on a column-strict tableau by fixing an $i$ (resp. $i+1$) when there is an $i+1$ below (resp. $i$ above), and then swapping the contents of the remaining numbers $i$ and $i+1$ in each row.

    A {\it linear extension} of a poset $P$ is a linear order $f$ that is compatible with $P$, that is, a bijective labeling $f: P \rightarrow \{1,2,\ldots,|P|\}$ such that if $a<_P b$ then $f(a)<f(b)$. In \cite{Stanley-promotion-evacuation}, Stanley introduced an analog of the BK involutions $t_i$ on linear extensions of a poset $P$, which swaps two adjacent labels $i$ and $i+1$ when they label incomparable elements of $P$ and fixes them otherwise.

    In this paper, we study a family of relations among the BK involutions called {\it cactus relations}, which present the cactus group. For any poset $P$ with $|P| = n$, it is easy to see that the BK involutions $t_1,\ldots,t_{n-1}$ acting on the linear extensions of $P$ satisfy the  relations $t_i^2=1$ and $t_it_j=t_jt_i$ for $|i-j| \geq 2$. On the other hand, for some posets $P$ (discussed extensively in \cite{chiang2023bender}), they {\it fail} to satisfy the extra family of relations that define the {\it cactus group}, namely $(t_iq_{jk})^2 = 1$ for $i+1<j<k$ where $q_{jk}: = q_{k-1}q_{k-j}q_{k-1}$ and $q_{i}:= t_1(t_2t_1)\ldots(t_it_{i-1}\ldots t_1)$.  See Section~\ref{subsec:cactus-relation} for a fuller discussion of these relations. In \cite{chiang2023bender}, a poset $P$ was called {\it LE-cactus} whenever the cactus group relations hold when $\{t_i\}$ act on its linear extensions. 
    
    Our main concern is the following question: for which posets do the BK involutions satisfy the cactus relations (defined in Definition \ref{def:LE-cactus})? The authors of \cite{CGP} showed that Ferrers posets are LE-cactus. In \cite{chiang2023bender}, several other families of LE-cactus posets were found, such as shifted Ferrers posets, rooted trees, and other minuscule posets. The paper also made the following conjecture about \textit{d-complete} posets. We refer the readers to \cite{proctor2019d} and \cite{kim2019hook} for the precise definition of d-complete posets.

    \begin{conjecture}[{\cite[Conjecture 3.23]{chiang2023bender}}]\label{con:d-complete}
        d-complete posets are LE-cactus.
    \end{conjecture}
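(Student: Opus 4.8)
The plan is to reduce the conjecture to Proctor's irreducible building blocks and to treat the minuscule and non-minuscule pieces by different methods. \textbf{Step 1 (reduction to slant-irreducible d-complete posets).} I would first study how the LE-cactus property interacts with the structure theory of \cite{proctor2019d}, which exhibits every connected d-complete poset as an iterated \emph{slant sum} of slant-irreducible d-complete posets, of which there are fifteen families. The goal is a gluing lemma: if every slant-irreducible component of a connected d-complete poset $P$ is LE-cactus, then so is $P$. This is not automatic — as the present paper shows, even ordinary ordinal sums can destroy the LE-cactus property — so the argument must exploit that in a slant sum the pieces overlap in at most one element, so that a linear extension of the assembly restricts to linear extensions of the pieces and the $t_i$ interact only through the labels of elements near the gluing point; the delicate part is controlling the operators $q_i$, whose definition as words in $t_1,\ldots,t_i$ always ranges over the small labels and so cuts across the decomposition. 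Granting this lemma, it remains to prove the conjecture for the fifteen slant-irreducible families.

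\textbf{Step 2 (the minuscule families).} Those slant-irreducible families consisting of minuscule posets are already known to be LE-cactus — the Ferrers ones by \cite{CGP}, the rest by \cite{chiang2023bender} — and I would re-read these through the crystal-theoretic lens, which has the best chance of being uniform: for a minuscule poset the linear extensions embed into a minuscule crystal so that $t_i$ becomes a crystal reflection operator and $q_i$ a partial Sch\"utzenberger--Lusztig involution, and the cactus relations become an instance of the cactus group action on crystals of Henriques and Kamnitzer. The content here is only to check that every minuscule slant-irreducible d-complete poset carries such a crystal model compatible with the BK action, so that these families need no further work.

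\textbf{Step 3 (the non-minuscule families, via d-complete jeu de taquin).} The remaining slant-irreducible families — the assorted swivels, insets, banners and their tailed variants — are d-complete but not minuscule, so there is no ambient crystal to exploit. The uniform approach I would attempt is to use the d-complete jeu-de-taquin and hook-length machinery of \cite{kim2019hook} to show that the word $q_{jk} := q_{k-1}q_{k-j}q_{k-1}$, applied to any linear extension, has net effect supported on the window of labels $\{j,\ldots,k\}$ — equivalently, that $q_{jk}$ computes the relative Sch\"utzenberger evacuation of that window and that d-complete rectification is confluent enough to guarantee this. Since $t_i$ with $i+1<j$ then acts on a window of labels disjoint from $\{j,\ldots,k\}$, the relation $(t_i q_{jk})^2 = 1$ would follow at once.

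\textbf{Main obstacle.} I expect Step 3 to be the crux, and in particular the possibility that d-complete jeu de taquin is \emph{not} confluent enough around the large ``double-tailed diamond'' intervals $d_k$ with $k>3$ that distinguish these families from ordinary and shifted shapes — precisely the regime with no classical precedent. If the uniform localization of $q_{jk}$ fails there, the fallback would be a family-by-family analysis: for each exotic family either construct a label-preserving bijection from its linear extensions onto those of a Ferrers or shifted-Ferrers poset intertwining every $t_i$ (transporting Step 2), or fix the ``trunk'' of the irreducible poset and reduce $(t_i q_{jk})^2 = 1$ to a finite verification on the bounded remaining data. Organizing such a case analysis so that it stays genuinely finite across all the exotic families, rather than growing with their parameters, is where the real difficulty would lie.
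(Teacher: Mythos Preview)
The statement you are attempting to prove is labeled \emph{Conjecture} in the paper, not Theorem: it is quoted verbatim from \cite{chiang2023bender} as an open problem, and the present paper makes no claim to resolve it. There is therefore no ``paper's own proof'' to compare against. The paper's actual contribution is orthogonal --- it characterizes LE-cactus posets within the family of ordinal sums of disjoint unions of chains, explicitly as a \emph{complement} to Conjecture~\ref{con:d-complete} (most such ordinal sums are not d-complete and most are not LE-cactus).

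What you have written is not a proof but a research outline, and you are candid about this (``I would first study'', ``the goal is a gluing lemma'', ``I expect Step 3 to be the crux''). Two of your three steps rest on lemmas you do not establish. In Step~1, the slant-sum gluing lemma is a genuine open question: slant sums are not ordinal sums, and as this very paper demonstrates, even the much simpler ordinal-sum operation fails to preserve the LE-cactus property in general, so there is no reason to expect slant sums to behave better without a specific argument. In Step~3, the claim that $q_{jk}$ has net effect supported on $\{j,\ldots,k\}$ for d-complete posets is essentially a reformulation of what needs to be proved, and you yourself flag the non-minuscule double-tailed-diamond regime as the likely obstruction; d-complete jeu de taquin is not known to supply the needed confluence. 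Only Step~2 is on solid ground, and that is because it cites existing results rather than proving anything new.

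In short: the proposal is a plausible plan of attack on an open conjecture, not a proof, and should not be presented as one.
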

    
    However, a large number of LE-cactus posets remained uncategorized. For example, a larger family of posets that includes all d-complete posets is the \textit{jeu-de-taquin} posets. \textit{Most} (but not all) jeu-de-taquin posets are LE-cactus: among all jeu-de-taquin posets of size up to 9, only 1 is not LE-cactus. Furthermore, there are many other posets that are not jeu-de-taquin but are also LE-cactus; one notable family involves ordinal sums of antichains. In this paper, we characterize the LE-cactus posets in a more general family, namely ordinal sums of disjoint unions of chains. Since most posets in this family are not LE-cactus, our result is a complement of Conjecture \ref{con:d-complete}.

    Furthermore, in \cite{chiang2023bender}, the following result about disjoint unions of LE-cactus posets was proved.

    \begin{theorem}[{\cite[Theorem 3.17]{chiang2023bender}}]\label{thm:P+Q-cactus}
        If $P$ and $Q$ are LE-cactus, then their disjoint union, $P+Q$, is LE-cactus.
    \end{theorem}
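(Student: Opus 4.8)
The plan is to observe that, in suitable coordinates, the Bender--Knuth action on $\mathcal{L}(P+Q)$ (the set of linear extensions of $P+Q$) splits into three independent pieces --- an action ``on $P$'', an action ``on $Q$'', and an action on the shuffle pattern of the two summands --- and then to verify each cactus relation $(t_iq_{jk})^2=1$ one coordinate at a time, reducing the $P$- and $Q$-pieces to the hypothesis and the shuffle piece to an elementary calculation with two-letter words.

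Let $n=|P|+|Q|$, and write $t^P_a$ and $q^P_{j,k}$ (resp.\ $t^Q_a$, $q^Q_{j,k}$) for the corresponding operators on $\mathcal{L}(P)$ (resp.\ $\mathcal{L}(Q)$). To $h\in\mathcal{L}(P+Q)$ attach: (i) the shuffle word $\sigma(h)\in\{\mathsf p,\mathsf q\}^{n}$ with $\sigma(h)_m=\mathsf p$ if $h^{-1}(m)\in P$ and $\mathsf q$ otherwise; (ii) $\rho_P(h)\in\mathcal{L}(P)$, obtained by listing the $P$-elements of $h$ in increasing order of their $h$-labels and relabelling them $1,\dots,|P|$; and (iii) $\rho_Q(h)\in\mathcal{L}(Q)$, likewise. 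Then $h\mapsto(\sigma(h),\rho_P(h),\rho_Q(h))$ is a bijection from $\mathcal{L}(P+Q)$ onto the set of triples $(\sigma,f,g)$ with $\sigma\in\{\mathsf p,\mathsf q\}^{n}$ having exactly $|P|$ letters $\mathsf p$, $f\in\mathcal{L}(P)$, and $g\in\mathcal{L}(Q)$. Since every element of $P$ is incomparable in $P+Q$ to every element of $Q$, a short case analysis yields the $t_i$-action in these coordinates: if $\sigma_i=\sigma_{i+1}=\mathsf p$, then $t_i$ fixes $\sigma$ and $g$ and acts on $f$ as $t^P_a$ with $a=\#\{m\le i:\sigma_m=\mathsf p\}$; symmetrically if $\sigma_i=\sigma_{i+1}=\mathsf q$; and if $\sigma_i\ne\sigma_{i+1}$, then $t_i$ transposes $\sigma_i,\sigma_{i+1}$ and fixes $f$ and $g$. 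In particular the shuffle word is equivariant for the Bender--Knuth action on two-letter words, in which $t_i$ transposes adjacent letters exactly when they differ.

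I would next propagate this to $q_m$ and $q_{jk}$. Recall the standard fact that on an $N$-element poset the operator $q_{N-1}=t_1(t_2t_1)\cdots(t_{N-1}\cdots t_1)$ is Sch\"{u}tzenberger evacuation; since $q_m$ uses only $t_1,\dots,t_m$, it preserves the down-set $h^{-1}(\{1,\dots,m+1\})$ and acts there as evacuation of that down-set, fixing the larger labels. For a disjoint union, evacuation decomposes: it restricts to evacuation on each summand and reverses the shuffle word. Hence, in the coordinates above, $q_m$ reverses the length-$(m+1)$ prefix of $\sigma$ and evacuates the bottom $\#\{m'\le m+1:\sigma_{m'}=\mathsf p\}$ labels of $f$ and the bottom $\#\{m'\le m+1:\sigma_{m'}=\mathsf q\}$ labels of $g$. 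Composing three such operators, and using the elementary fact that on two-letter words reversing the prefix of length $k$, then of length $k-j+1$, then of length $k$ reverses exactly the window $[j,k]$, one finds that $q_{jk}=q_{k-1}q_{k-j}q_{k-1}$ acts as: reversing $\sigma_j\cdots\sigma_k$; applying $q^P_{a_P,b_P}$ to $f$ with $a_P=1+\#\{m<j:\sigma_m=\mathsf p\}$ and $b_P=\#\{m\le k:\sigma_m=\mathsf p\}$ (and trivially on $f$ if the $P$-window $[a_P,b_P]$ has at most one element); and applying $q^Q_{a_Q,b_Q}$ to $g$, with $a_Q,b_Q$ given by the same formulas for $\mathsf q$. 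This step --- above all the assertion that evacuation of a disjoint union restricts to evacuation on the summands while reversing the shuffle --- is the part I expect to be the main obstacle; it is a clean statement that I would establish by tracking the defining word of $q_m$ through the coordinate decomposition (inducting on its length), or extract from the standard combinatorial description of evacuation.

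Granting the decomposition, fix $i+1<j<k$ and verify $(t_iq_{jk})^2=\mathrm{id}$ coordinate by coordinate. The crucial point is that, because $i+1<j$, the operator $t_i$ changes $\sigma$ only by a permutation of $[1,j-1]$ and $q_{jk}$ only by a permutation of $[j,k]$, so neither changes $\#\{m<j:\sigma_m=\mathsf p\}$ or $\#\{m\le k:\sigma_m=\mathsf p\}$; consequently the windows $[a_P,b_P]$ and $[a_Q,b_Q]$, and the index $a$ above, are the same at every term of the product. On the $\sigma$-coordinate: $t_i$ has support $\{i,i+1\}$ and $q_{jk}$ has support $[j,k]$, which are disjoint, and both are involutions, so the square is the identity. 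On the $f$-coordinate: if the $P$-window $[a_P,b_P]$ has at most one element, or $\sigma_i\ne\sigma_{i+1}$, or $\sigma_i=\sigma_{i+1}=\mathsf q$, then one of $t_i,q_{jk}$ acts as the identity on $f$ and the other is an involution --- using that $P$ is LE-cactus to know $(q^P_{a_P,b_P})^2=\mathrm{id}$ when it is needed --- so the square is the identity; otherwise $\sigma_i=\sigma_{i+1}=\mathsf p$ and $a_P<b_P$, and then $t_i$ acts as $t^P_a$ where, since $\sigma_{i+1}=\mathsf p$ and $i+1\le j-1$, one has $a+1\le\#\{m<j:\sigma_m=\mathsf p\}=a_P-1$, so $a+1<a_P<b_P$ and $(t^P_aq^P_{a_P,b_P})^2=\mathrm{id}$ is precisely an instance of the cactus relation for the LE-cactus poset $P$. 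The $g$-coordinate is treated identically using that $Q$ is LE-cactus. Hence $(t_iq_{jk})^2$ fixes all three coordinates, so it is the identity on $\mathcal{L}(P+Q)$, and therefore $P+Q$ is LE-cactus.
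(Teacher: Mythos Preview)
The paper does not prove this theorem: it is quoted from \cite{chiang2023bender} and used as background, with no argument given here, so there is no in-paper proof to compare your attempt against.

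On its own merits, your strategy is sound and is essentially the natural one. The coordinate bijection $h\mapsto(\sigma(h),\rho_P(h),\rho_Q(h))$ is correct, and your description of how $t_i$ acts in these coordinates is right. The shuffle coordinate is, as you implicitly use, exactly the Bender--Knuth action on linear extensions of $C_{|P|}+C_{|Q|}$, so the fact that $q_{m}$ reverses the length-$(m{+}1)$ prefix of $\sigma$ is precisely Lemma~\ref{lem:D_n-evacuation} of the present paper. Your computation that $q_{jk}$ then acts on $f$ as $q^P_{a_P,b_P}$ (with the windows unchanged throughout $(t_iq_{jk})^2$ because $t_i$ only touches $\sigma_{[1,i+1]}$ and $q_{jk}$ only touches $\sigma_{[j,k]}$) is correct, and the final case split reduces cleanly to the LE-cactus hypothesis on $P$ and $Q$. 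One small imprecision: the fact that $(q^P_{a_P,b_P})^2=\mathrm{id}$ does not require $P$ to be LE-cactus; it holds for every poset by Corollary~\ref{cor:qjk-involution}.

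The one genuine obligation you flag yourself is the decomposition of evacuation on $P+Q$: that $q_m$ acts on $(\sigma,f,g)$ by reversing the prefix of $\sigma$ and applying the appropriate partial evacuations $q^P_{\bullet}$ and $q^Q_{\bullet}$ to $f$ and $g$. This is true, but it is the substantive step, and your sketch (``track the defining word of $q_m$ through the coordinate decomposition'') hides a real computation, since the shuffle word changes as you apply the successive $t_i$'s, so the indices $a$ at which $t^P_a$ is applied shift along the way. A clean route is to argue one promotion at a time: show that $\partial_{m+1}$ on $P+Q$ acts on $(\sigma,f,g)$ by cyclically rotating $\sigma_{[1,m+1]}$ and applying $\partial^P$ (resp.\ $\partial^Q$) to the bottom block of $f$ (resp.\ $g$) of the appropriate size, which follows from the observation that the promotion chain in $P+Q$ lies entirely in one summand; then compose. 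With that lemma in hand, your argument goes through.
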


    On the other hand, little is known about ordinal sums of LE-cactus posets, i.e. if $P$ and $Q$ are LE-cactus, when is $P\oplus Q$ LE-cactus? Some progress was made in \cite{chiang2023bender}.

    \begin{prop}[{\cite[Proposition 3.18, 3.19, 3.20]{chiang2023bender}}]\label{prop:A_ioplusP}
        Let $A_m$ be the antichain of size $m$. If $P$ is LE-cactus, then $A_1\oplus P$ and $A_2\oplus P$ are LE-cactus. However, for any non-empty finite poset $P$, $A_m\oplus P$ is not LE-cactus for $m > 3$.
    \end{prop}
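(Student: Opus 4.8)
The plan is to treat the three assertions in turn, using throughout the following picture of $\operatorname{LE}(A_m\oplus P)$. In any linear extension the labels $1,\dots,m$ are distributed in some order among the $m$ pairwise incomparable elements of $A_m$, and the labels $m+1,\dots,m+|P|$ restrict to a linear extension of $P$, so $\operatorname{LE}(A_m\oplus P)\cong S_m\times\operatorname{LE}(P)$. Under this identification, $t_1,\dots,t_{m-1}$ act as the Coxeter generators $s_1,\dots,s_{m-1}$ on the $S_m$-factor (by left multiplication) and trivially on $\operatorname{LE}(P)$; the involution $t_m$ acts as the identity, as it would have to exchange the element of $A_m$ labelled $m$ with the element of $P$ labelled $m+1$, which are comparable; and $t_{m+1},\dots,t_{m+|P|-1}$ act as $t_1^P,\dots,t_{|P|-1}^P$ on $\operatorname{LE}(P)$ and trivially on $S_m$. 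I will also use the standard fact that on the linear extensions of any poset every $q_i$, and hence every $q_{jk}$, is an involution (each $q_i$ acts by evacuating the order ideal carrying the labels $1,\dots,i+1$; cf.\ \cite{Stanley-promotion-evacuation}).

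For $m=1$ the $S_1$-factor is trivial: $t_1=\operatorname{id}$ and $t_\ell$ acts as $t^P_{\ell-1}$ for $\ell\ge 2$. Telescoping the defining product gives $q_\ell^{A_1\oplus P}=q_{\ell-1}^P$ for all $\ell$ (with $q_0^P:=\operatorname{id}$), hence $q_{jk}^{A_1\oplus P}=q^P_{j,\,k-1}$ when $j\le k-2$ and $q^{A_1\oplus P}_{k-1,k}=(q^P_{k-2})^2=\operatorname{id}$. A cactus relation $(t_iq_{jk})^2=1$ for $A_1\oplus P$, with $i+1<j<k\le|P|+1$, is immediate when $t_i$ or $q_{jk}$ is the identity, and otherwise (so $i\ge2$, $j\le k-2$) becomes $(t^P_{i-1}q^P_{j,\,k-1})^2=1$, which holds because $P$ is LE-cactus --- the hypotheses $i\ge2$, $i+1<j$, $j\le k-2$, $k\le|P|+1$ imply $i-1\ge1$, $(i-1)+1<j$, $j<k-1$, $k-1\le|P|$, the admissibility conditions for this relation of $P$. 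Hence $A_1\oplus P$ is LE-cactus. For $m=2$ the $S_2$-factor contributes one nontrivial involution $\tau$: $t_1$ acts as $\tau$, $t_2=\operatorname{id}$, $t_\ell$ acts as $t^P_{\ell-2}$ for $\ell\ge3$, and $\tau$ is central since it commutes with every $t_\ell$, $\ell\ge3$, by far-commutation. Collecting the commuting copies of $\tau$ in the defining products gives $q_\ell^{A_2\oplus P}=\tau^{\ell}q^P_{\ell-2}$ for $\ell\ge2$ and $q_1^{A_2\oplus P}=\tau$, so $q_{jk}^{A_2\oplus P}$ equals $q^P_{j,\,k-2}$ up to a power of $\tau$, with the degenerate values $\operatorname{id}$ when $k-j=2$ and $\tau$ when $k-j=1$. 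Each relation $(t_iq_{jk})^2=1$ then collapses: when $t_i\in\{\operatorname{id},\tau\}$ it follows from $\tau^2=1$ and $q_{jk}^2=1$; when $t_i$ acts as $t^P_{i-2}$ it reduces, after cancelling central powers of $\tau$ and shifting indices down by $2$, to $(t^P_{i-2}q^P_{j,\,k-2})^2=1$, again admissible because $P$ is LE-cactus. Hence $A_2\oplus P$ is LE-cactus.

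For $m\ge4$ and $P$ an arbitrary non-empty poset (LE-cactus or not) it suffices to exhibit one violated relation, and I will use $(t_1q_{3,\,m+1})^2$, a legitimate instance since $1+1<3<m+1\le m+|P|$. Every operator involved acts only on the $S_m$-factor, so the computation takes place in $S_m$. Because $t_m=\operatorname{id}$, the last block of the defining product of $q_m$ collapses: $q_m=q_{m-1}\cdot(t_{m-1}\cdots t_1)$, and evaluating this product shows $q_m$ acts as the reversal of the interval $\{2,\dots,m\}$, fixing the label $1$ --- the ``defect'' caused by $t_m=\operatorname{id}$, in contrast with the reversal of all of $\{1,\dots,m+1\}$ that $q_m$ would realise if the $t_i$ satisfied the braid relations. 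Likewise $q_{m-2}$, built only from honest Coxeter generators, acts as the reversal of $\{1,\dots,m-1\}$, fixing $m$. Conjugating the latter by the former, $q_{3,m+1}=q_mq_{m-2}q_m$ acts as a permutation of the form $(1\,3)\cdot\pi$ where $\pi$ is an involution supported on $\{4,\dots,m\}$ and the label $2$ is a fixed point. Since $\pi$ commutes with both $s_1$ and $(1\,3)$, the operator $(t_1q_{3,m+1})^2$ acts as $\bigl(s_1\,(1\,3)\bigr)^2$, a $3$-cycle on $\{1,2,3\}$ and in particular not the identity; thus it moves some linear extension, so $A_m\oplus P$ is not LE-cactus. (The same computation also settles $m=3$ whenever $P$ is non-empty.)

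The crux is the permutation bookkeeping in the third part: one must verify carefully that conjugating the reversal of $\{1,\dots,m-1\}$ by the reversal of $\{2,\dots,m\}$ leaves $(1\,3)$ as its only effect on $\{1,2,3\}$, with $2$ a fixed point and every remaining transposition confined to $\{4,\dots,m\}$ --- this ``detachment'' of $(1\,3)$ from the rest is exactly what keeps $\bigl(s_1\,(1\,3)\bigr)^2$ from collapsing to the identity. Tracking indices through the telescoped products in the first two parts, particularly the degenerate small-gap cases $k-j\le2$ and the check that each index shift carries $i+1<j$ to an admissible inequality for $P$, is routine but also requires care.
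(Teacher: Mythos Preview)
Your proof is correct. The paper itself does not give a self-contained proof of this proposition --- it is imported from \cite{chiang2023bender} --- but in Section~\ref{sec:discussion} the paper sketches how its own machinery recovers the negative part: $A_m\in\mathfrak{D}_m$, so for $m\ge 3$ Proposition~\ref{prop:checking} reduces the question to whether the triple $(0,m,|P|)$ is cactus-compatible, and Propositions~\ref{thm:main-thm} and~\ref{prop:D3} say it is not once $|P|\ge 1$. Likewise, the positive parts for $m=1,2$ fall out of Proposition~\ref{prop:D1,2} combined with the argument of Proposition~5.2.

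Your route is genuinely different and more elementary for this specific statement. Instead of passing through the ordered-set-partition model and the general cactus-compatibility criteria, you work directly with the product decomposition $\mathcal{L}(A_m\oplus P)\cong S_m\times\mathcal{L}(P)$ and do the permutation algebra by hand. The key computation --- that $q_m$ acts as the reversal of $\{2,\dots,m\}$ (because the last block of its defining product loses $t_m=\operatorname{id}$), so that $q_{3,m+1}$ becomes $(1\,3)$ times a reversal of $\{4,\dots,m\}$ --- is exactly the specialization of the paper's Corollary~\ref{cor:q_jk-neck} to $p=0$, $r=1$, $\ell=m-2$, but you obtain it without that corollary's scaffolding. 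What the paper's approach buys is uniformity: the same framework handles arbitrary disjoint unions of chains in the bottom slot, not just antichains, and locates the obstruction in the failure of two interval-reversals on $\ZZ/n\ZZ$ to commute. What your approach buys is directness: no auxiliary model, no appeal to the general classification, and a single explicit witness $(i,j,k)=(1,3,m+1)$ with the resulting $3$-cycle on $\{1,2,3\}$ visible by inspection.

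One small remark: your parenthetical that the same computation settles $m=3$ is correct and worth keeping, since the paper's Discussion section in fact asserts the stronger $m\ge 3$ version even though the proposition as stated in the Introduction says $m>3$.
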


    Since disjoint unions of chains are LE-cactus, our main theorem about their ordinal sums is another step towards understanding the ordinal sum operation.

    Let us summarize our main result. Let $\mathfrak{D}_n$, with $n>1$, be the set of all posets with $n$ elements that are disjoint unions of at least two chains. For completeness, we define $\mathfrak{D}_1$ to include the poset $C_1$ containing one element. Let $\lambda = (\lambda_1,\ldots,\lambda_\ell)$ be a partition of $n$ with $\ell > 1$. We define $D_\lambda$ to be a disjoint union of $\ell$ chains such that the $i$th chain has $\lambda_i$ elements. 

    \begin{definition}\label{def:cactus-compatible}
        We say a triple $(p,n,q)$ is \textbf{cactus-compatible} if it satisfies the following condition: Let $P$ and $Q$ be any poset with $|P| = p$ and $|Q| = q$; let $D_\lambda$ be any poset in $\mathfrak{D}_n$. Let $R = P~\oplus~D_\lambda~\oplus~Q$, and let $f$ be any linear extension of $R$. Then for all $i+1<j<k$, the element $(t_iq_{jk})^2$ fixes the labels of the elements in $D_\lambda$ when acting on $f$.
    \end{definition}
    
    Our main theorem, Theorem \ref{thm:char}, follows the following three propositions.

    \begin{repprop}{\ref{thm:main-thm}}
        For $n>3$, $(p,n,q)$ is cactus-compatible if and only if
        \begin{enumerate}
            \item $p>q+n-4$, or
            \item $p = q+n-4$ and $q~\text{mod}~n\neq 1,3$, or
            \item $p = q+n-r$ for $r>4$ and $q~\text{mod}~ n > r-1$.
        \end{enumerate}
        In particular, if $p\leq q-1$, $(p,n,q)$ is not cactus-compatible.
    \end{repprop}

    \begin{repprop}{\ref{prop:D3}}
        $(p,3,q)$ is cactus-compatible if and only if
        \begin{enumerate}
            \item $p>q-1$, or
            \item $p = q-1$ and $q~\text{mod}~n\neq 1,3$.
        \end{enumerate}
        In particular, if $p\leq q-1$, $(p,3,q)$ is not cactus-compatible.
    \end{repprop}
        
    \begin{repprop}{\ref{prop:D1,2}}
        For all $p,q$, $(p,1,q)$ and $(p,2,q)$ are cactus-compatible.
    \end{repprop}

    Our main theorem is the following characterization.

    \begin{repthm}{\ref{thm:char}}
        Consider a sequence of positive integers $a_0=0,a_1,\ldots,a_\ell,a_{\ell+1} = 0$ and a sequence of posets $D_{\mu_1},\ldots,D_{\mu_\ell}$ where $D_{\mu_i}\in\mathfrak{D}_{a_i}$. The poset $P = D_{\mu_1}\oplus \ldots\oplus D_{\mu_\ell}$ is LE-cactus if and only if for all $i = 1,2,\ldots,\ell$, the triples
        \[ \left(\sum_{r=0}^{i-1} a_r, \,\, a_i, \,\, \sum_{r=i+1}^{\ell+1} a_r\right) \]
        are cactus-compatible.
    \end{repthm}

    The paper is outlined as follows: in Section \ref{sec:def-res}, we will review the key definitions and results on Bender-Knuth involutions, promotion and evacuation, and cactus relations. Then in Section \ref{sec:pro-eva}, we will study the actions of promotion and evacuation on linear extensions of $D_\lambda$. We will also introduce a way to think about these actions as permuting numbers on ordered set partitions. Finally, in Section \ref{sec:main-thm}, we will prove our main theorem through a few results.

    \begin{remark}
        We can think of a linear extension of a poset $P$ as a bijection from the elements of $P$ to the set $\{1,\ldots,p\}$ where $p = |P|$. Hence, in this paper, when we use the operation mod $p$, we mean the result is in $\{1,\ldots,p\}$ instead of $\{0,\ldots,p-1\}$.
    \end{remark}

    \noindent\textbf{Acknowledgments} I would like to thank Vic Reiner for introducing the subject to me, and for his extremely valuable guidance and support. I would like to thank the 2022 University of Minnesota Combinatorics and Algebra REU, supported by RTG grant NSF/DMS-1745638, for organizing the program in which this project was initiated. I thank Judy Chiang, Anh Hoang, Matthew Kendall, Ryan Lynch, Benjamin Przybocki, Janabel Xia, Pasha Pylyavksyy, and Sylvester Zhang for their helpful discussions and comments. I thank Connor McCausland for their help with proofreading.

\section{Definitions and Results}\label{sec:def-res}

    \subsection{Poset operations}\label{subsec:poset-ops}

    Let $P$ and $Q$ be any finite posets with the partial orders $\leq_P$ and $\leq_Q$ respectively. The \textit{ordinal sum} of $P$ and $Q$ is the poset $R$ whose elements are those in $P \cup Q$, and $a\leq_R b$ if and only if
    \begin{itemize}
        \item $a,b\in P$ and $a\leq_P b$, or
        \item $a,b\in Q$ and $a\leq_Q b$, or
        \item $a\in P$ and $b\in Q$.
    \end{itemize}
    We denote the ordinal sum of $P$ and $Q$ as $P~\oplus~Q$. On the other hand, the \textit{disjoint union} of $P$ and $Q$ is the poset $R$ whose elements are those in $P \cup Q$, and $a\leq_R b$ if and only if
    \begin{itemize}
        \item $a,b\in P$ and $a\leq_P b$, or
        \item $a,b\in Q$ and $a\leq_Q b$.
    \end{itemize}
    We denote the disjoint union of $P$ and $Q$ as $P+Q$. Finally, a special family of posets that we will consider is the chain posets in which the partial order is a total order. We denote the chain poset with $n$ elements as $C_n$.

    \subsection{Bender-Knuth involutions}\label{subsec:BK-involutions}

    A {\it linear extension} of a finite poset $P$ is a linear order $f$ that is compatible with $P$, that is, a bijective labeling $f: P \rightarrow \{1,2,\ldots,|P|\}$ such that if $a<_P b$ then $f(a)<f(b)$. Let $\mathcal{L}(P)$ be the set of linear extensions of a poset $P$. The BK involutions act on linear extensions of any poset $P$ as follows: each $t_i:\mathcal{L}(P)\rightarrow\mathcal{L}(P)$ is a bijection that swaps two adjacent labels $i$ and $i+1$ when they label incomparable elements of $P$ and fixes them otherwise.
    
    Let us briefly point out that the BK involutions defined here are motivated by the classical BK involutions on {\it Young tableaux}. Recall that given a partition $\lambda = (\lambda_1,\ldots,\lambda_\ell)$ of $n$, the \textit{Young diagram} of $\lambda$ is a collection of $n$ left-justified boxes in $\ell$ rows such that row $i$ has $\lambda_i$ boxes. A \textit{standard Young tableau} of shape $\lambda$ is a filling of the Young diagram of $\lambda$ with the numbers $1,2,\ldots,n$ such that the numbers strictly increase from left-to-right in each row and from top-to-bottom in each column.
    
    The \textit{Ferrers} poset $F_\lambda$ of $\lambda$ is the set $\{(i,j)~|~1\leq j \leq \lambda_i\}$ with the partial order $(i,j)<(i,j+1)$ and $(i,j)<(i+1,j)$. Observe that every linear extension of a Ferrers poset can be viewed as a standard Young tableau, as shown in Figure \ref{fig:SYT-Ferrer}. Thus, in the special case of Ferrers posets, the $t_i$ defined above can be identified with a special case of the classical BK involutions on standard Young tableaux. We refer the readers to \cite{chiang2023bender} for a more thorough discussion of these classical BK involutions.
    
    \begin{figure}[h!]
        \centering
        \includegraphics[width = 0.4\textwidth]{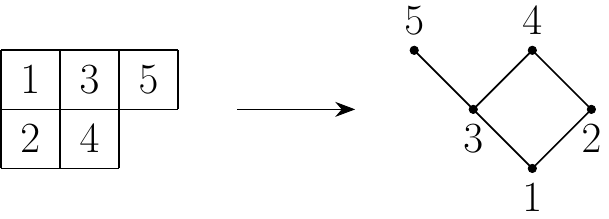}
        \caption{Standard Young tableau and Ferrers poset}
        \label{fig:SYT-Ferrer}
    \end{figure}

    \subsection{Cactus relations}\label{subsec:cactus-relation}    

    The terminology {\it cactus group} was introduced in work of Henriques and Kamnitzer \cite{henriques2006crystals}, as a name 
    for the fundamental group of the moduli space $\overline{M}_{0,n+1}(\mathbb{R})$ of real genus zero stable curves with $n+1$ marked points,
    appearing in work of Devadoss \cite{devadoss239tessellations}
    and Davis, Januszkiewicz and Scott \cite{davis2003fundamental}.

    \begin{definition}[Cactus group]\label{def:cactus-group}
        The cactus group $\mathcal{C}_n$ is generated by $q_{[i,j]}, 1\leq i < j \leq n$, satisfying the relations
        \begin{enumerate}
            \item $q_{[i,j]}^2 = 1$,
            \item $q_{[i,j]}q_{[k,l]}=q_{[k,l]}q_{[i,j]}$ if $j<k$,
            \item $q_{[i,j]}q_{[k,l]}q_{[i,j]}=q_{[i+j-l,i+j-k]}$ if $i\leq k<l\leq j$.
        \end{enumerate}
    \end{definition}

    We note that there is a well-defined group homomorphism from the cactus group $\mathcal{C}_n$ to the symmertic group $\mathfrak{S}_n$, sending $q_{[i,j]}$ to $\left(\begin{smallmatrix} 
    i & i+1 & \cdots & j\\
    j & j-1 & \cdots & i
    \end{smallmatrix}\right)$. For example, letting $(i,j,k,l) = (2,7,3,5)$, the third relation in Definition \ref{def:cactus-group} becomes $q_{[2,7]}q_{[3,5]}q_{[2,7]} = q_{[4,6]}$. Indeed,
    \[ 12345678 \xrightarrow{q_{[2,7]}} 17654328 \xrightarrow{q_{[3,5]}} 17456328 \xrightarrow{q_{[2,7]}} 12365478 \]
    and
    \[ 12345678 \xrightarrow{q_{[4,6]}} 12365478. \]
    The reason why we want to note this example is because we will see a similar idea in the proof of our main theorem. In \cite{CGP}, Chmutov, Glick and Pylyavskyy proved that the action of BK involutions on column-strict tableaux satisfies the cactus relation by introducing an isomorphic presentation of the cactus group that will be used in this paper.

    \begin{theorem}[{\cite[Theorem 1.8]{CGP}}]\label{thm:cactus-presentation}
        The relations in Definition \ref{def:cactus-group} for $\mathcal{C}_n$ are equivalent to the following relations on the generators $t_i, i = 1,\ldots,n-1$:
        \begin{enumerate}
            \item $t_i^2 = 1$,
            \item $(t_tt_j)^2 = 1$ if $|i-j|>1$,
            \item $(t_iq_{k-1}q_{k-j}q_{k-1})^2 = 1$ if $i+1<j<k$,
        \end{enumerate}
        where we define
        \begin{equation}
        \label{qi-definition}
        q_{i} = t_1(t_2t_1)\ldots(t_it_{i-1}\ldots t_1).
        \end{equation}
        For our convenience, we also define 
        \begin{equation}
        \label{qij-definition}
        q_{jk} = q_{k-1}q_{k-j}q_{k-1}
        \end{equation}
        so that the third relation becomes $(t_iq_{jk})^2 = 1$ if $i+1<j<k$.
    \end{theorem}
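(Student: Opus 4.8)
The plan is to prove that the two presentations define isomorphic groups by exhibiting an explicit pair of mutually inverse homomorphisms. Write $\mathcal{C}_n$ for the group on the generators $q_{[i,j]}$ of Definition \ref{def:cactus-group}, and $G$ for the group presented by the three relations on $t_1,\dots,t_{n-1}$ in the statement. The first thing to get right is the dictionary: the correct correspondence is \emph{not} $t_i \leftrightarrow q_{[i,i+1]}$, but rather matches the derived elements $q_i$ of \eqref{qi-definition} with the reversals of initial segments, $q_i \leftrightarrow q_{[1,i+1]}$ (equivalently $q_{m-1}\leftrightarrow q_{[1,m]}$). So I would first record, inside $\mathcal{C}_n$, how a general reversal decomposes into initial ones. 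Applying relation (3) of Definition \ref{def:cactus-group} with outer interval $[1,j]$ and inner interval $[1,j-i+1]$ gives the key identity
\begin{equation}\label{eq:init-decomp}
q_{[i,j]} = q_{[1,j]}\,q_{[1,j-i+1]}\,q_{[1,j]}, \qquad 1\le i<j\le n,
\end{equation}
so that $\mathcal{C}_n$ is already generated by the initial reversals $q_{[1,2]},\dots,q_{[1,n]}$. Under $q_{m-1}\leftrightarrow q_{[1,m]}$, \eqref{eq:init-decomp} is the exact mirror of $q_{jk}=q_{k-1}q_{k-j}q_{k-1}$ from \eqref{qij-definition}; this is the structural reason the presentations should agree.

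With the dictionary fixed, I would define $\phi:\mathcal{C}_n\to G$ on generators by $\phi(q_{[i,j]}) = q_{j-1}q_{j-i}q_{j-1}$ (so $\phi(q_{[1,m]})=q_{m-1}$), and $\psi:G\to\mathcal{C}_n$ by the word forcing $\psi(q_i)=q_{[1,i+1]}$. Concretely, since \eqref{qi-definition} gives $q_i = q_{i-1}(t_it_{i-1}\cdots t_1)$, solving for $t_i$ yields $t_i=q_{i-1}q_iq_{i-1}q_{i-2}$ in $G$, so I set $\psi(t_i) = q_{[1,i]}q_{[1,i+1]}q_{[1,i]}q_{[1,i-1]}$ (with $q_{[1,1]}=q_{[1,0]}:=1$). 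The bulk of the proof is then checking that each map respects relations. A few preliminary identities in $G$ are needed and are cheap: using only $t_i^2=1$ and the staircase factorization $q_m = (t_1\cdots t_m)q_{m-1} = q_{m-1}(t_m\cdots t_1)$, a telescoping cancellation shows $q_m^2=q_{m-1}^2$, whence $q_m^2=1$ by induction. This immediately gives $\phi(q_{[i,j]})^2=1$, i.e. relation (1) of $\mathcal{C}_n$.

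The remaining content splits as follows. For $\phi$, relations (2) and (3) of Definition \ref{def:cactus-group} become commutation and conjugation identities among the derived elements $q_m$ in $G$, whose only available justification is the third $t$-relation $(t_iq_{jk})^2=1$ together with the braid/commutation structure of the $t_i$; establishing these when the relevant index ranges overlap is the technical heart, and I would handle it by induction on interval length, reducing each case to \eqref{eq:init-decomp} and the generating local relation. For $\psi$, I would check $\psi(t_i)^2=1$, the far-commutation $(t_it_j)^2=1$ for $|i-j|>1$, and that $(t_iq_{jk})^2=1$ maps to an instance of relation (2) (the reversal $q_{[j,k]}$ is disjoint from and to the right of the support of $\psi(t_i)$); each reduces, via \eqref{eq:init-decomp}, to a reversal-of-reversal computation among the $q_{[1,m]}$. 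Finally I would confirm $\phi\circ\psi$ and $\psi\circ\phi$ act as the identity on generators, which follows from \eqref{eq:init-decomp} together with the involution property.

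The main obstacle is the relation-checking above, and the essential difficulty is conceptual rather than bookkeeping: although every identity I must verify is visibly true in the symmetric group $\mathfrak{S}_n$ — where $q_{[i,j]}$ is literally a reversal and each check reduces to tracking positions — the homomorphism $\mathcal{C}_n\to\mathfrak{S}_n$ is far from injective (already $\mathcal{C}_3\cong D_\infty$ is infinite, with pure part $\mathbb{Z}$), so $\mathfrak{S}_n$-level reasoning proves nothing about equality in $\mathcal{C}_n$ or in $G$. Every commutation and conjugation identity must instead be deduced purely from the three defining relations of each presentation. The care required is exactly to organize these derivations economically, which I expect is cleanest via induction on the lengths of the intervals involved, always reducing to the initial-reversal identity \eqref{eq:init-decomp} and the single local relation $(t_iq_{jk})^2=1$.
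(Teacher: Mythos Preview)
The paper does not prove this theorem at all: it is quoted verbatim as \cite[Theorem 1.8]{CGP} and used as a black box, so there is no ``paper's own proof'' to compare your proposal against. Your outline is the natural strategy for showing two presentations define isomorphic groups, and your dictionary $q_{m-1}\leftrightarrow q_{[1,m]}$ together with the decomposition $q_{[i,j]}=q_{[1,j]}q_{[1,j-i+1]}q_{[1,j]}$ is exactly the right structural observation. The easy direction (that $\psi$ respects the $t$-relations, in particular that $(t_iq_{jk})^2=1$ lands on an instance of disjoint commutation in $\mathcal{C}_n$) is correctly identified and essentially complete as you state it.

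Two cautions on the hard direction. First, you mention ``the braid/commutation structure of the $t_i$'', but note that the $t$-presentation has \emph{no} braid relation $(t_it_{i+1})^3=1$; you have only $t_i^2=1$, far commutation, and the cactus relation $(t_iq_{jk})^2=1$. So every near-neighbor identity you need in $G$ must ultimately come from the third relation, not from braids. Second, your plan for showing that $\phi$ respects relations (2) and (3) of Definition~\ref{def:cactus-group} is stated only at the level of ``induction on interval length, reducing to \eqref{eq:init-decomp} and the local relation''; this is where all the genuine work in \cite{CGP} lives, and your proposal does not yet contain the inductive mechanism (e.g.\ which smaller instance of (2) or (3) a given one reduces to, and why the base cases are covered by the $t$-relations). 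As written this is a sound roadmap rather than a proof; to complete it you would need to supply those reductions explicitly.
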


    On linear extensions of posets, the first two relations in Theorem \ref{thm:cactus-presentation} always hold. The last relation, $(t_iq_{jk})^2 = 1$ if $i+1<j<k$, does not. For example, one can check that the relation $(t_1q_{34})^2 = 1$ does not hold for the linear extension in Figure \ref{fig:min-non-cactus}. This motivates the following definition.

    \begin{definition}[{\cite[Definition 3.10]{chiang2023bender}}]\label{def:LE-cactus}
        Call the relation $(t_iq_{jk})^2 = 1$ if $i+1<j<k$ the \textbf{cactus relation}, and call posets on which this relation holds \textbf{LE-cactus} posets.
    \end{definition}

    \begin{figure}[h!]
        \centering
        \includegraphics[scale = 0.7]{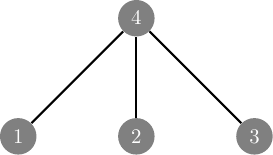}
        \caption{A non-LE-cactus poset}
        \label{fig:min-non-cactus}
    \end{figure}

    \subsection{Promotion and Evacuation}\label{subsec:pro-and-eva}

    In \cite{Stanley-promotion-evacuation}, Stanley gave two operations on linear extensions: promotion and evacuation. These will be extremely useful for understanding our main result. First, we introduce \textit{promotion} $\partial_i$.

    \begin{definition}[Promotion]\label{def:promotion}
        Let $1\leq i\leq |P|$, promotion $\partial_i:\mathcal{L}(P)\rightarrow\mathcal{L}(P)$ is a bijection that sends a linear extension $f$ of $P$ to $f' = \partial_i(f)$ by the following procedure:
        \begin{enumerate}
            \item Let $t_1\in P$ satisfy $f(t_1) = 1$ and remove the label $1$ from $t_1$.
            \item Among the elements of $P$ covering $t_1$, let $t_2$ be the one with the smallest label $f(t_2)$, "slide" this label down to $t_1$, i.e. remove this label from $t_2$ and place it at $t_1$.
            \item Repeat the procedure until reaching an element $t_k$ such that either $t_k$ is a maximal element or no element covering $t_k$ has label less than or equal to $i$.
            \item Label $t_k$ with $i+1$ and decrease every label from $2$ to $i+1$ by one. Note that at this point there might be two labels $i+1$, we only decrease the one that labels $t_k$.
        \end{enumerate}
    \end{definition}

    The (saturated) chain $t_1<t_2<\ldots<t_k$ above is called the \textit{promotion chain}. Figure \ref{fig:ex-promotion} shows an example of promotion $\partial_5$ of the linear extension $f$ shown in Figure \ref{subfig:ex-promotion1}. The red elements in Figure \ref{subfig:ex-promotion6} form the promotion chain. Note that before the final step (Figure \ref{subfig:ex-promotion5}), there are two labels 6. In the final step, we only decrease the one labeling the top element of the promotion chain (the label 6 in red).

    \begin{figure}[h!]
    		
     \centering
        \begin{subfigure}[b]{0.25\textwidth}
            \centering
            \includegraphics[width = 0.8\textwidth]{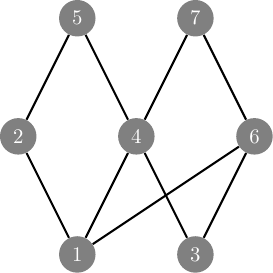}
            \caption{Linear extension $f$}
            \label{subfig:ex-promotion1}
        \end{subfigure}
     \quad\quad
        \begin{subfigure}[b]{0.25\textwidth}
            \centering
            \includegraphics[width = 0.8\textwidth]{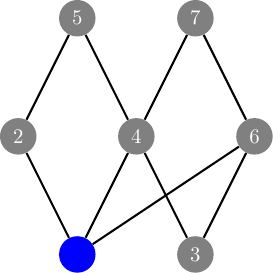}
            \caption{Remove label 1}
            \label{subfig:ex-promotion2}
        \end{subfigure}
     \quad\quad
        \begin{subfigure}[b]{0.25\textwidth}
            \centering
            \includegraphics[width = 0.8\textwidth]{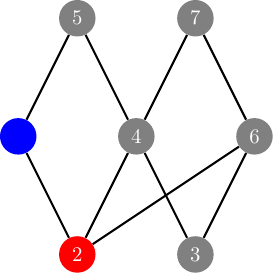}
            \caption{Slide label 2 down}
            \label{subfig:ex-promotion3}
        \end{subfigure}

        \vspace{1em}
    
        \begin{subfigure}[b]{0.25\textwidth}
            \centering
            \includegraphics[width = 0.8\textwidth]{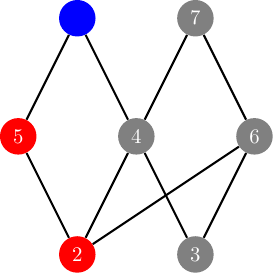}
            \caption{Slide label 5 down}
            \label{subfig:ex-promotion4}
        \end{subfigure}
     \quad\quad
        \begin{subfigure}[b]{0.25\textwidth}
            \centering
            \includegraphics[width = 0.8\textwidth]{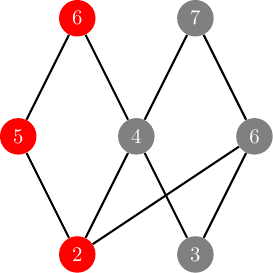}
            \caption{Add label 6}
            \label{subfig:ex-promotion5}
        \end{subfigure}
     \quad\quad
        \begin{subfigure}[b]{0.25\textwidth}
            \centering
            \includegraphics[width = 0.8\textwidth]{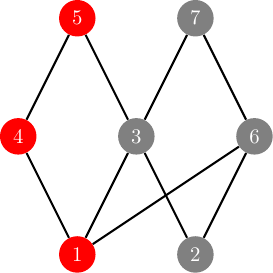}
            \caption{Decrease by one}
            \label{subfig:ex-promotion6}
        \end{subfigure}
        
        \caption{Promotion $\partial_5$}
        \label{fig:ex-promotion}
        
    \end{figure}

    While it is not clear from the definition of promotions that they are related to Bender-Knuth moves $t_i$, Stanley showed that they are indeed the combinatorial interpretation for $t_{i-1}t_{i-2}\ldots t_1$.

    \begin{theorem}[{\cite{Stanley-promotion-evacuation}}]
        For any poset $P$, $\partial_i = t_{i-1}t_{i-2}\ldots t_1$ for $1<i\leq |P|$.
    \end{theorem}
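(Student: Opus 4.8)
The plan is to induct on $i$, reducing the inductive step to the single identity $\partial_i = t_{i-1}\circ\partial_{i-1}$. Granting this reduction, the inductive hypothesis $\partial_{i-1} = t_{i-2}\cdots t_1$ gives $\partial_i = t_{i-1}t_{i-2}\cdots t_1$ at once, so the whole statement follows. The base case $i=2$ asserts $\partial_2 = t_1$, which I would verify directly by running promotion with threshold $2$: it fixes $f$ when $f^{-1}(2)$ covers $f^{-1}(1)$, and swaps the carriers of labels $1$ and $2$ when those elements are incomparable — precisely the two cases of $t_1$.

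To prove $\partial_i = t_{i-1}\circ\partial_{i-1}$, I would first record an explicit description of how $\partial_j$ relabels $f$ in terms of its promotion chain $u_1 < u_2 < \cdots < u_k$. Writing $\ell_m = f(u_m)$, one has $1 = \ell_1 < \ell_2 < \cdots < \ell_k \le j$ (the sequence is increasing because $u_m < u_{m+1}$ forces $f(u_m) < f(u_{m+1})$ in a linear extension), and promotion slides $\ell_{m+1}$ down onto $u_m$, places $j+1$ at $u_k$, and decrements every label in $\{2,\dots,j+1\}$ by one, leaving the non-placed copy of $j+1$ fixed. The two structural consequences I need are: (i) after $\partial_{i-1}$, the label $i-1$ sits exactly at the top $u_{k'}$ of the $\partial_{i-1}$-chain, while the label $i$ has not moved and still sits at $f^{-1}(i)$, since neither the slides nor the decrement of $\partial_{i-1}$ touches the non-placed copy of label $i$; and (ii) the $\partial_i$-chain is obtained from the $\partial_{i-1}$-chain by appending at most one element.

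The heart of the argument is a dichotomy governed by the comparability of $u_{k'}$ and $f^{-1}(i)$. Because $\partial_{i-1}$ halts at $u_{k'}$, every cover of $u_{k'}$ carries a label $\ge i$; hence $f^{-1}(i)$ covers $u_{k'}$ if and only if $u_{k'}$ has any cover of label $\le i$, i.e. if and only if the minimal cover-label of $u_{k'}$ equals $i$. A short check (any $u_{k'} < f^{-1}(i)$ would produce a cover of $u_{k'}$ of label $\le i$, forcing that cover to be $f^{-1}(i)$ itself) shows this is also exactly when $u_{k'}$ and $f^{-1}(i)$ are comparable, and exactly when the $\partial_i$-chain extends, by the single element $f^{-1}(i)$. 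Now by (i), applying $t_{i-1}$ to $g := \partial_{i-1}(f)$ compares the carriers of labels $i-1$ and $i$, namely $u_{k'}$ and $f^{-1}(i)$. Thus in the extending case these are comparable, $t_{i-1}$ fixes $g$, and I would verify from the explicit formula that $g$ already equals $\partial_i(f)$; in the non-extending case they are incomparable, $t_{i-1}$ swaps labels $i-1$ and $i$, and I would verify that this swap carries $g$ to $\partial_i(f)$. Each verification is a term-by-term comparison of the two labelings on the chain elements, on $f^{-1}(i)$, and on all remaining elements.

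The main obstacle is the bookkeeping in these two verifications: one must track the interaction of the ``slide'' phase of promotion with the global decrement and the duplicate-$(j+1)$ convention carefully enough to confirm that $t_{i-1}\circ\partial_{i-1}$ and $\partial_i$ assign the same label to every element. The conceptual crux that tames this bookkeeping is exactly (i)–(ii): identifying $u_{k'}$ and $f^{-1}(i)$ as the precise pair that $t_{i-1}$ acts upon, and observing that their comparability simultaneously decides the extension of the promotion chain and the action of $t_{i-1}$.
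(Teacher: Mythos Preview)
The paper does not give its own proof of this theorem; it simply states the result with a citation to Stanley's survey \cite{Stanley-promotion-evacuation} and moves on. So there is nothing in the paper to compare your argument against.

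That said, your argument is correct and is essentially the standard one. The reduction to $\partial_i = t_{i-1}\circ\partial_{i-1}$ is the natural induction, and your two structural facts are exactly what is needed: (i) after $\partial_{i-1}$ the labels $i-1$ and $i$ sit at $u_{k'}$ and $f^{-1}(i)$, and (ii) the $\partial_i$-chain agrees with the $\partial_{i-1}$-chain except possibly for one extra step to $f^{-1}(i)$. Your comparability claim is also right: since $f(u_{k'})\le i-1<i$, any comparability forces $u_{k'}<f^{-1}(i)$, and then the stopping condition for $\partial_{i-1}$ rules out any intermediate cover, so $f^{-1}(i)$ must cover $u_{k'}$. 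The two case checks (extending vs.\ non-extending) then go through exactly as you describe; the only labels that differ between $\partial_{i-1}(f)$ and $\partial_i(f)$ are those at $u_{k'}$ and $f^{-1}(i)$, and $t_{i-1}$ reconciles them precisely according to the dichotomy.
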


    \begin{figure}[h!]
    		
     \centering
        \begin{subfigure}[b]{0.25\textwidth}
            \centering
            \includegraphics[width = 0.8\textwidth]{Fig/ex-promotion/ex-promotion1.pdf}
            \caption{Linear extension $f$}
            \label{subfig:ex-pro-start}
        \end{subfigure}
     \quad\quad
        \begin{subfigure}[b]{0.25\textwidth}
            \centering
            \includegraphics[width = 0.8\textwidth]{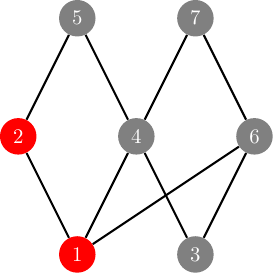}
            \caption{$t_1$}
            \label{subfig:ex-pro-t1}
        \end{subfigure}
     \quad\quad
        \begin{subfigure}[b]{0.25\textwidth}
            \centering
            \includegraphics[width = 0.8\textwidth]{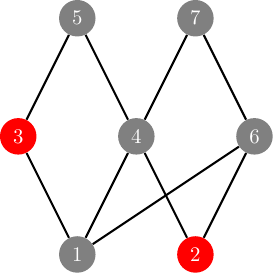}
            \caption{$t_2$}
            \label{subfig:ex-pro-t2}
        \end{subfigure}

        \vspace{1em}
    
        \begin{subfigure}[b]{0.25\textwidth}
            \centering
            \includegraphics[width = 0.8\textwidth]{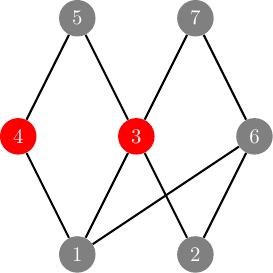}
            \caption{$t_3$}
            \label{subfig:ex-pro-t3}
        \end{subfigure}
     \quad\quad
        \begin{subfigure}[b]{0.25\textwidth}
            \centering
            \includegraphics[width = 0.8\textwidth]{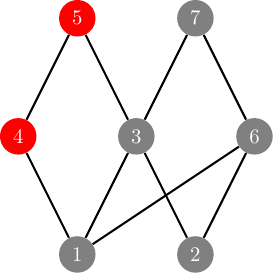}
            \caption{$t_4$}
            \label{subfig:ex-pro-t4}
        \end{subfigure}
     \quad\quad
        \begin{subfigure}[b]{0.25\textwidth}
            \centering
            \includegraphics[width = 0.8\textwidth]{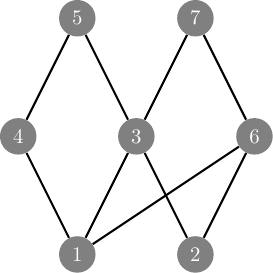}
            \caption{Result}
            \label{subfig:ex-pro-res}
        \end{subfigure}
        
        \caption{The action of $t_4t_3t_2t_1$}
        \label{fig:ex-pro-t}
        
    \end{figure}

    For example, Figure \ref{fig:ex-pro-t} shows the action of $t_4t_3t_2t_1$ on the same linear extension as in Figure \ref{subfig:ex-promotion1}, and the result in Figure \ref{subfig:ex-pro-res} is the same as in Figure \ref{subfig:ex-promotion6}. Thus, we can define $\partial_i = t_{i-1}t_{i-2}\ldots t_1$ for $i>1$ and $\partial_1 = 1$, and we can express the operator operator appearing in \eqref{qi-definition} as $q_{k-1} = \partial_1\partial_2\ldots\partial_k$. With the definition of promotion, the combinatorial interpretation of $q_i$, called \textit{evacuation}, can be described easily.

    \begin{definition}[Evacuation]\label{def:evacuation}
        Let $1\leq i\leq |P|$, evacuation $q_i:\mathcal{L}(P)\rightarrow\mathcal{L}(P)$ is a bijection that sends a linear extension $f$ of $P$ to $f' = q_i(f)$ by the following procedure:
        \begin{enumerate}
            \item Apply $\partial_{i+1}$ and "freeze" the label $i+1$.
            \item Apply $\partial_{i}$ and "freeze" the label $i$.
            \item Continue applying promotion until every label from $1$ to $i+1$ is frozen.
        \end{enumerate}
        We will occasionally refer to $\partial_{i+1}$ as the first round of promotion in $q_i$. Similarly, $\partial_i$ is the second round of promotion and so on. Thus, $q_i$ has $i+1$ rounds of promotion.
    \end{definition}

    \begin{figure}[h!]
    		
     \centering
        \begin{subfigure}[b]{0.25\textwidth}
            \centering
            \includegraphics[width = 0.8\textwidth]{Fig/ex-promotion/ex-promotion1.pdf}
            \caption{Linear extension $f$}
            \label{subfig:ex-eva-start}
        \end{subfigure}
     \quad\quad
        \begin{subfigure}[b]{0.25\textwidth}
            \centering
            \includegraphics[width = 0.8\textwidth]{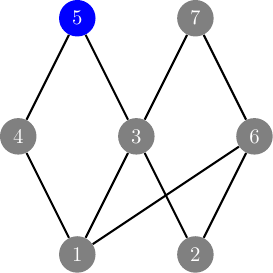}
            \caption{$\partial_5$}
            \label{subfig:ex-eva5}
        \end{subfigure}
     \quad\quad
        \begin{subfigure}[b]{0.25\textwidth}
            \centering
            \includegraphics[width = 0.8\textwidth]{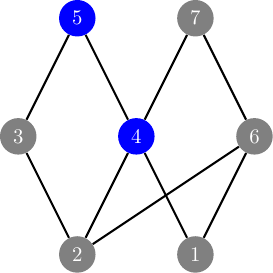}
            \caption{$\partial_4$}
            \label{subfig:ex-eva4}
        \end{subfigure}

        \vspace{1em}
    
        \begin{subfigure}[b]{0.25\textwidth}
            \centering
            \includegraphics[width = 0.8\textwidth]{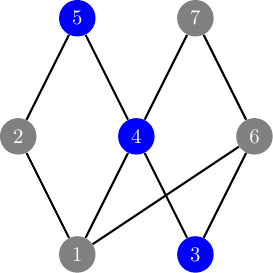}
            \caption{$\partial_3$}
            \label{subfig:ex-eva3}
        \end{subfigure}
     \quad\quad
        \begin{subfigure}[b]{0.25\textwidth}
            \centering
            \includegraphics[width = 0.8\textwidth]{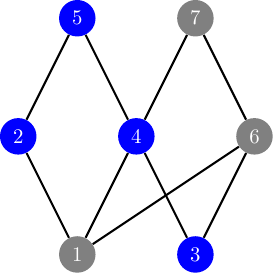}
            \caption{$\partial_2$}
            \label{subfig:ex-eva2}
        \end{subfigure}
     \quad\quad
        \begin{subfigure}[b]{0.25\textwidth}
            \centering
            \includegraphics[width = 0.8\textwidth]{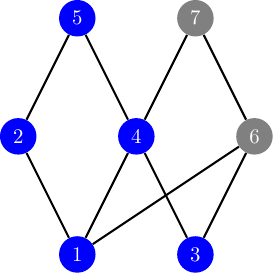}
            \caption{$\partial_1$}
            \label{subfig:ex-eva1}
        \end{subfigure}
        
        \caption{Evacuation $q_4$}
        \label{fig:ex-eva}
        
    \end{figure}

     Figure \ref{fig:ex-eva} shows an example of evacuation. We want to point out that there is an offset by $1$ in the definition of evacuation as $q_i$ includes $i+1$ promotions $\partial_{i+1}, \partial_i,\ldots,\partial_1$. This is, unfortunately, unavoidable because the action of $t_{i-1}t_{i-2}\ldots t_1$ affects the labels from 1 up to $i$. We refer the reader to \cite{Stanley-promotion-evacuation} for further details about promotion and evacuation. What we want to emphasize is the following property.

    \begin{prop}[{\cite[Theorem 2.1]{Stanley-promotion-evacuation}}]\label{prop:eva-involution}
        Evacuation is an involution, that is $q_i^2 = 1$.
    \end{prop}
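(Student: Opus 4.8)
The statement to prove is Proposition~\ref{prop:eva-involution}, namely that evacuation $q_i$ is an involution, $q_i^2 = 1$. Since the excerpt attributes this to Stanley \cite[Theorem 2.1]{Stanley-promotion-evacuation}, the expected proof is a combinatorial one about linear extensions.

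\medskip

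The plan is to work with the growth-diagram or ``antichain of minimal elements'' picture of evacuation, which makes the involutivity transparent. First I would reformulate evacuation in terms of the sequence of partial linear extensions it produces: running $q_i$ on $f$ freezes labels $i+1, i, i+1$ one round at a time, and I would track, for each step, which element of $P$ receives the currently-frozen label. The key observation is that applying $q_i$ twice should return each element to its original label, so I want an invariant description of the element that ends up carrying label $j$ after $q_i$. Following Stanley, the cleanest route is to restrict attention to the order ideal $P_{\le i+1} \subseteq P$ consisting of the elements labeled $1,\dots,i+1$ by $f$ (this set is preserved throughout, since promotion only moves labels $\le i+1$), reducing everything to the case where $f$ is a full linear extension of a poset of size $i+1$ and we must show ordinary evacuation is an involution.

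\medskip

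The main step is then the classical argument: encode a linear extension $f$ of an $n$-element poset $P$ by the chain of order ideals $\emptyset = I_0 \subset I_1 \subset \cdots \subset I_n = P$ where $I_k = f^{-1}(\{1,\dots,k\})$, and analyze how promotion acts on this chain. One shows that evacuation $q_{n-1}$ corresponds to the operation reversing the poset (passing to $P^*$, the dual) and reading the induced chain of ideals backwards, or equivalently that $q_{n-1}$ sends $f$ to the linear extension $f^*$ defined by $f^*(x) = n + 1 - (\text{the step at which } x \text{ becomes frozen})$. Because the ``freezing order'' data is manifestly symmetric under doing the whole procedure a second time — each element is frozen once in the first application, and tracing through the second application returns it to its original position — one gets $q_i^2 = 1$. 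Concretely, I would set up the bijection between the rounds of $q_i$ and entries of a triangular ``evacuation array'' (à la Sch\"utzenberger), prove each diagonal of the array records consistent data, and read off that transposing the array twice is the identity.

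\medskip

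The hardest part will be making the bookkeeping in the promotion chains precise: when $\partial_{i+1}, \partial_i, \partial_{i-1}, \dots$ are applied in succession with labels progressively frozen, one must argue carefully that the frozen labels never obstruct later promotion chains and that the element receiving the $k$-th frozen label depends only on $f$ in a way that is symmetric. This is exactly where Stanley's original proof invests its effort, and I would either cite it directly or reproduce the order-ideal/growth-diagram argument in enough detail to verify the symmetry. Once that symmetry is established, $q_i^2 = 1$ is immediate.
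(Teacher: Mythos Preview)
The paper does not give its own proof of this proposition: it is stated as a citation to Stanley \cite[Theorem~2.1]{Stanley-promotion-evacuation} and used as a black box, with only an illustrative example inviting the reader to check a particular instance. So there is no argument in the paper to compare your proposal against.

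That said, your sketch is a faithful outline of the classical Sch\"utzenberger--Stanley proof (the triangular evacuation array / growth-diagram symmetry), and nothing in it is wrong. If the goal were to supply the missing proof, your plan would work; just be aware that in the context of this paper no proof is expected, and a one-line citation would suffice.
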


    For example, we invite the readers to apply $q_4$ to the linear extension in Figure \ref{subfig:ex-eva1}. By Proposition \ref{prop:eva-involution}, one should expect the result to be the linear extension in Figure \ref{subfig:ex-eva-start}.

    \begin{corollary}\label{cor:qjk-involution}
        The operator $q_{jk}$ defined in \eqref{qij-definition} is also an involution, that is $q_{jk}^2 = 1$.
    \end{corollary}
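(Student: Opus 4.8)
The plan is to deduce this immediately from Proposition~\ref{prop:eva-involution}. That result says each evacuation operator $q_i$ satisfies $q_i^2 = 1$ as a bijection on $\mathcal{L}(P)$, so $q_i$ is its own inverse. By definition, $q_{jk} = q_{k-1}q_{k-j}q_{k-1}$, so one can write $q_{jk} = q_{k-1}\,q_{k-j}\,q_{k-1}^{-1}$, exhibiting $q_{jk}$ as the conjugate of the involution $q_{k-j}$ by $q_{k-1}$. Since a conjugate of an involution in any group (here the group of bijections of $\mathcal{L}(P)$ under composition) is again an involution, the claim follows. Equivalently, one expands directly and cancels using $q_{k-1}^2 = q_{k-j}^2 = 1$:
\[ q_{jk}^2 = q_{k-1}q_{k-j}q_{k-1}\cdot q_{k-1}q_{k-j}q_{k-1} = q_{k-1}q_{k-j}\bigl(q_{k-1}^2\bigr)q_{k-j}q_{k-1} = q_{k-1}\bigl(q_{k-j}^2\bigr)q_{k-1} = q_{k-1}^2 = 1. \]

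The only point requiring a remark is that all the operators appearing are defined, i.e. that the indices $k-1$ and $k-j$ lie in the range $\{1,\ldots,|P|\}$; this is guaranteed by the hypothesis $i+1<j<k$ in the definition of $q_{jk}$ together with $k\le|P|$. There is no genuine obstacle in this argument — the content is entirely carried by Proposition~\ref{prop:eva-involution}, and the corollary is a one-line consequence of the fact that $q_{jk}$ is built as a conjugate of an evacuation operator.
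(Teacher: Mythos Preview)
Your proof is correct and matches the paper's intended approach: the corollary is stated immediately after Proposition~\ref{prop:eva-involution} with no separate proof, and your expansion $q_{jk}^2 = q_{k-1}q_{k-j}q_{k-1}^2q_{k-j}q_{k-1} = 1$ is exactly the one-line deduction the paper has in mind. One minor quibble: the well-definedness of $q_{jk}$ only needs $1 \le k-j$ and $k-1 \le |P|-1$, i.e.\ $1 \le j < k \le |P|$, and does not involve $i$ at all, but this does not affect the argument.
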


    Thus, the cactus relation $(t_iq_{jk})^2 = 1$ is equivalent to $t_i$ and $q_{jk}$ commuting.

\subsection{Unions of Antichains}\label{subsec:D_n_necklace}

    Now we introduce our main concern of this paper.

    \begin{definition}\label{def:D_n}
        For $n>1$, we define $\mathfrak{D}_n$ to be the set of all posets with $n$ elements that are unions of at least two chains. For example, Figure \ref{fig:D_3_posets} shows the two posets in $\mathfrak{D}_3$. For completeness, we define $\mathfrak{D}_1$ to include the poset $C_1$ containing one element.
    \end{definition}
        
    \begin{remark}
        If a poset is a single chain, all Bender-Knuth moves become the identity, which is not interesting. Thus, we want at least two chains in our definition.
    \end{remark}

    \begin{figure}[h!]
        \centering
        \includegraphics[width = 0.5\textwidth]{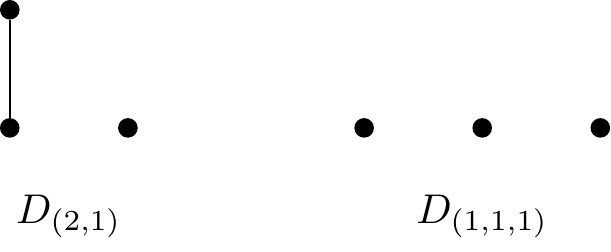}
        \caption{$\mathfrak{D}_3$}
        \label{fig:D_3_posets}
    \end{figure}

    \begin{definition}\label{def:D_lamda}
        Let $\lambda = (\lambda_1,\ldots,\lambda_\ell)$ be a partition of $n$ with $\ell > 1$. We define $D_\lambda$ to be a union of $\ell$ chains such that the $i$th chain has $\lambda_i$ elements, that is, $D_\lambda = C_{\lambda_1} + C_{\lambda_2} + \ldots + C_{\lambda_\ell}$.
    \end{definition}

    \begin{remark}
         The order of the chains does not matter, so we can assume that the lengths of the chains are decreasing and associate each poset in $\mathfrak{D}_n$ with a partition of $n$. In other words, $\mathfrak{D}_n = \{D_\lambda~|~\lambda\vdash n,~ \ell(\lambda) > 1\}$ for $n>1$, and $\mathfrak{D}_1 = \{D_{(1)}\}$.
    \end{remark}

    Observe that a linear extension $f$ of $D_\lambda=C_{\lambda_1}+\cdots+C_{\lambda_\ell}$ is completely determined by the {\it ordered set partition} $(L_1,\ldots,L_{\lambda_\ell})$  of the set $\{1,2,\ldots,n\}$ having blocks $L_i:=f^{-1}(C_i)$.

\section{Promotion and Evacuation}\label{sec:pro-eva}

    In this section, we will analyze the behavior of linear extensions of disjoint union of chains under promotion and evacuation. First, we study the effect of promotion on linear extensions of $D_\lambda$ on the corresponding ordered set partition.

    \begin{lemma}\label{lem:D_n-promotion}
        Let $f$ be a linear extension of $D_\lambda\in\mathfrak{D}_n$. If a chain in $D_\lambda$ contains the label $i$, then after $\partial_k$, this chain contains the label $i$ if $i>k$, and it contains the label $i-1~(\text{mod}~ k)$ if $i\leq k$. In particular, this chain contains the label $k$ if $i = 1$.
    \end{lemma}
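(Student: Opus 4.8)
The plan is to trace the promotion algorithm $\partial_k$ step by step and track which chain of $D_\lambda$ holds each label, using the crucial structural fact that in a disjoint union of chains, the element covering a given non-maximal element lies in the \emph{same} chain. Recall from the paper that $\partial_k = t_{k-1}t_{k-2}\cdots t_1$, but for this lemma it is cleaner to work directly with the sliding description of promotion in Definition~\ref{def:promotion}: we remove the label $1$, slide labels down along the promotion chain, cap with $k+1$, and then decrement all labels in $\{2,\ldots,k+1\}$ by one.

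First I would observe that since $D_\lambda$ is a disjoint union of chains, the promotion chain $t_1 < t_2 < \cdots < t_m$ produced by the algorithm is entirely contained in a \emph{single} chain $C$ of $D_\lambda$ — namely the chain containing the element labeled $1$. Indeed, each $t_{r+1}$ covers $t_r$, and in $D_\lambda$ the only elements covering $t_r$ lie in the same chain as $t_r$ (a covered element has a unique cover within its chain, and no covers outside it). So the entire ``sliding'' phase only permutes labels within $C$: the set of labels occupied by $C$ is unchanged after sliding, except that one new label is about to be introduced at the top of the promotion chain. Concretely, before the decrement step, chain $C$ loses nothing and gains the fresh label $k+1$ at $t_m$ (it had the label $1$, which was removed, but $1$ is not among $\{2,\dots,k\}$ so this is the only change to $C$'s label set besides gaining $k+1$); every other chain keeps exactly the same set of labels throughout the sliding phase.

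Next I would apply the final decrement step: every label $\ell$ with $2 \le \ell \le k+1$ becomes $\ell - 1$, while labels $> k+1$ (equivalently $> k$, since $k+1$ occurs only on $C$ now) are untouched, and the label $1$ has already been consumed. Now take a chain containing some label $i$ \emph{before} $\partial_k$. If $i > k$: if moreover $i > k+1$ the label is fixed; the case $i=k+1$ cannot occur before promotion interferes — wait, it can, but then it is decremented to $k$, yet simultaneously the fresh $k+1$ was placed on $C$ and decremented to $k$... I would handle this by noting that the statement concerns the label $i$ held \emph{before} $\partial_k$ and asks where it goes, and for $i>k$ the chain still ``contains the label $i$'' because: for $i>k+1$ nothing moved it; for the boundary, one checks directly that the net effect on chains other than $C$ is the identity on labels $>k$, and on $C$ the relevant bookkeeping (lose $1$, gain $k+1$, then shift $\{2,\dots,k+1\}$ down by one) shows $C$'s label set among values $>k$ is preserved. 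If $i \le k$: if $i = 1$, its chain is $C$, and after sliding $C$ acquires $k+1$ at the top, which the decrement turns into $k$ — so $C$ contains $k = i - 1 \pmod k$. If $2 \le i \le k$, then the label $i$ sits on some chain (either $C$ or another); in either case the sliding phase does not remove the \emph{value} $i$ from that chain's label set when $i \ge 2$ — here I must be slightly careful, because sliding moves labels \emph{between elements} of $C$, but it never changes which \emph{set} of values $C$ holds except for the $1 \to k+1$ swap — so the value $i$ is still present on the same chain, and the decrement sends it to $i-1$. Since $2 \le i \le k$ gives $i - 1 \in \{1,\dots,k-1\}$, this is exactly $i-1 \pmod k$.

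The main obstacle I anticipate is the boundary bookkeeping around the labels $1$, $k$, and $k+1$ on the promotion chain $C$: one must argue carefully that ``sliding'' preserves the multiset of values held by $C$ (apart from the intended removal of $1$ and insertion of $k+1$), rather than, say, pushing a value out of $C$ into a neighboring chain — which \emph{cannot} happen precisely because $C$ is an order-theoretically isolated chain, so all the action stays inside it. Once that invariant is stated cleanly (ideally as a one-line sub-claim: ``every chain of $D_\lambda$ other than the one containing the minimum is setwise fixed by the sliding phase, and the minimal chain exchanges its label $1$ for the new label $k+1$''), the rest is a direct case check on the decrement step, organized by whether $i > k$, $i = 1$, or $2 \le i \le k$.
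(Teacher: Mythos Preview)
Your proposal is correct and follows essentially the same approach as the paper: split into the cases $i>k$, $2\le i\le k$, and $i=1$, and use the key structural fact that in $D_\lambda$ every cover relation stays within a single chain, so the entire promotion chain lies in the chain containing the label $1$. Your hesitation around the boundary value $i=k+1$ is unnecessary: Definition~\ref{def:promotion} explicitly stipulates that among the two possible labels $k+1$, only the freshly placed one on the top of the promotion chain is decremented, so $\partial_k$ genuinely fixes every pre-existing label greater than $k$, and the case $i>k$ is immediate without further bookkeeping.
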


    \begin{proof}
        This is apparent from the definition of promotion. Since $\partial_k$ does not affect any label greater than $k$, if $i>k$, this label stays at the same element after $\partial_k$. If $1<i\leq k$, then this label stays in the same chain during promotion because the chains are disjoint, and it is decreased by one in the final step of promotion. Hence, the chain that originally contained $i$ will contain $i-1$ after promotion $\partial_k$. Finally, if $i = 1$, then this label is removed at the beginning of promotion. However, because the chains are disjoint, the final element of the promotion chain is in the same chain. This element is eventually labeled $k+1$ and then decreased to $k$. Thus, the chain that originally contains $1$ will contain $k$ after promotion $\partial_k$.
    \end{proof}

    In terms of ordered set partitions, the action of $\partial_k$ can be described as follows: keep the numbers from $k+1$ to $n$ in the same blocks. Then, send $1$ to the block containing $2$, send $2$ to the block containing $3$, $\ldots$, and send $k$ to the block containing $1$. In other words, promotion is left multiplication by $(k\ldots 21)$. Figure \ref{fig:pro-neck} shows an example of the correspondence between applying $\partial_5$ to the linear extension and applying $(54321)$ to the ordered set partition.

    \begin{figure}[h!]
        \centering
        \includegraphics[width = 0.7\textwidth]{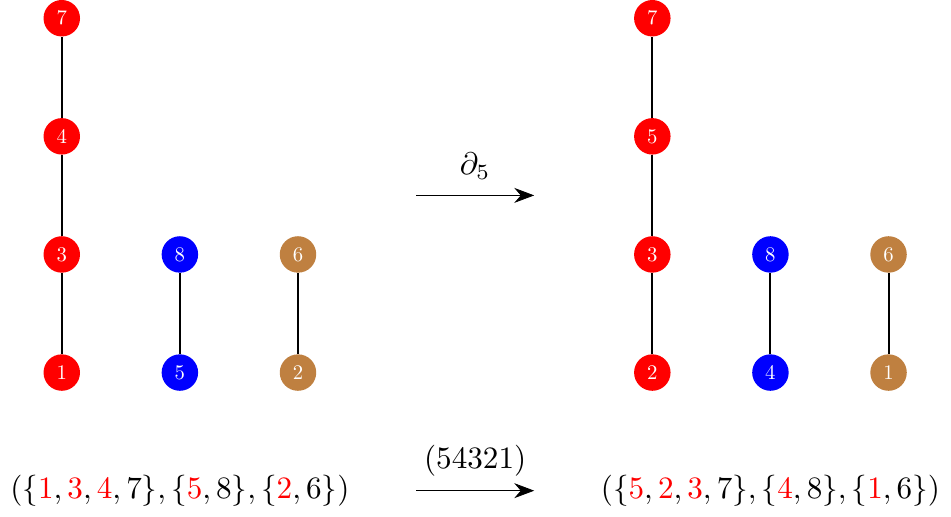}
        \caption{Promotion $\partial_5$ on a linear extension and the corresponding ordered set partition}
        \label{fig:pro-neck}
    \end{figure}

    Next, we will look at the effect of evacuation.

    \begin{lemma}\label{lem:D_n-evacuation}
        Let $f$ be a linear extension of $D_\lambda\in\mathfrak{D}_n$. If a chain in $D_\lambda$ contains the label $i$, then after $q_{k-1}$, this chain contains the label $i$ if $i>k$ and $k+1-i$ if $i\leq k$.
    \end{lemma}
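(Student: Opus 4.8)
The plan is to reduce the claim about $q_{k-1}$ to an iterated application of the promotion lemma (Lemma~\ref{lem:D_n-promotion}), since $q_{k-1}$ is by definition the composite $\partial_k\partial_{k-1}\cdots\partial_1$ with the successively larger labels frozen after each round. Concretely, I would track a single chain $C$ and the label it carries among the not-yet-frozen values. Suppose at the start $C$ contains the label $i$. If $i > k$, then none of the rounds $\partial_k,\partial_{k-1},\ldots,\partial_1$ touches it (each $\partial_j$ fixes labels exceeding $j \le k$), so it stays $i$; this handles the easy case. The substance is the case $i \le k$.

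For $i \le k$, I would argue round by round. The first round is $\partial_k$: by Lemma~\ref{lem:D_n-promotion}, the chain $C$ now carries label $i-1$ if $i>1$, or label $k$ if $i=1$ — in either case we can write the new label as $(i-1) \bmod k$ in the paper's convention where residues lie in $\{1,\ldots,k\}$. After $\partial_k$ the label $k$ is frozen. Now two subcases split naturally: if $i = 1$, the chain's label became $k$, which is frozen, so the chain keeps label $k = k+1-i$ through all remaining rounds, as claimed. If $i \ge 2$, the chain carries the unfrozen label $i-1 \le k-1$, and we are in exactly the same situation one size down: apply $\partial_{k-1}$, and by the promotion lemma the label becomes $(i-2)\bmod(k-1)$, then $k-1$ is frozen. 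The key observation to make precise is that as long as the chain's current label $j$ satisfies $j \ge 2$, round $\partial_m$ (for the current top value $m$) decreases it to $j-1$; and the chain's label hits the value $1$ precisely after $i-1$ rounds (since it started at $i$ and decreased by one each round), i.e. when we are about to apply $\partial_{k-i+1}$. At that round, label $1$ gets sent to the top and frozen as $k-i+1$. Hence the final frozen label on $C$ is $k-i+1 = k+1-i$. I would present this as a short induction on $i$ (or equivalently on the number of rounds), with the base case $i=1$ handled above and the inductive step observing that after $\partial_k$ and freezing $k$, the chain sits in the linear extension of the same poset restricted to labels $\{1,\ldots,k-1\}$ with label $i-1$, so the inductive hypothesis for $q_{k-2}$ (one size smaller) gives final label $(k-1)+1-(i-1) = k+1-i$.

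Alternatively, and perhaps more cleanly, I would phrase the whole thing through the ordered-set-partition picture already set up after Lemma~\ref{lem:D_n-promotion}: promotion $\partial_m$ is left multiplication by the cycle $(m\ \cdots\ 2\ 1)$ on the positions of the numbers $1,\ldots,m$ in the blocks. Then $q_{k-1}$ corresponds to the product $(k\,\cdots\,1)(k-1\,\cdots\,1)\cdots(2\,1)(1)$ acting on $\{1,\ldots,k\}$, and a standard computation identifies this product with the order-reversing involution $j \mapsto k+1-j$ on $\{1,\ldots,k\}$ (fixing $k+1,\ldots,n$) — which is exactly the permutation that the cactus generator $q_{[1,k]}$ maps to in $\mathfrak{S}_n$, consistent with $q_{k-1}$ being evacuation. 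Tracking which block a given number lands in then immediately yields the statement.

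The main obstacle, and the only place demanding care, is the bookkeeping of the freezing mechanism: after each round $\partial_m$ the new top label $m$ is frozen, so the chain we are tracking may ``exit'' the active region at the round where its label becomes $1$, and one must verify that it exits with exactly the label $k+1-i$ rather than something off by one. The offset-by-one in the definition of evacuation ($q_{k-1}$ uses promotions $\partial_k,\ldots,\partial_1$) is the usual source of indexing errors here, so I would state the round/label correspondence as an explicit invariant — ``after $s$ rounds, an originally-$i$ chain with $i \le k$ and $i > s$ carries label $i-s$; if $i \le s$ it carries the frozen label $k+1-i$'' — and prove it by induction on $s$.
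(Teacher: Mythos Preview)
Your proposal is correct and follows essentially the same approach as the paper: iterate Lemma~\ref{lem:D_n-promotion} to show that after the first $i-1$ rounds $\partial_k,\ldots,\partial_{k-i+2}$ the chain carries label $1$, then the $i$th round $\partial_{k-i+1}$ sends it to the frozen label $k-i+1$. The paper's proof is simply a terser version of your round-by-round tracking (without the explicit invariant or the alternative permutation-product argument, though the latter is exactly what the paper records immediately after the lemma).
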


    \begin{proof}
        Recall that $q_{k-1}$ actually includes $k$ rounds of promotion: $\partial_k,\partial_{k-1},\ldots,\partial_1$. Since $q_{k-1}$ does not affect any label greater than $k$, if $i>k$, this label stays at the same element after $q_{k-1}$. If $1\leq i\leq k$, let $P_j$ be the chain in $D_\lambda$ that originally contained the label $i$. By Lemma \ref{lem:D_n-promotion}, after the first $i-1$ rounds of promotion $\partial_k,\partial_{k-1},\ldots,\partial_{k-i+2}$, $P_j$ contains the label $1$. Thus, after applying $\partial_{k-i+1}$ in the $i$th round of promotion, $P_j$ contains the label $k-i+1$. None of the subsequent rounds of promotion affect the label $k-i+1$, so this label stays in $P_j$. This completes the proof.
    \end{proof}

    Similar to promotion, in terms of ordered set partitions, the action of $q_{k-1}$ can be described as follows: keep the numbers from $k+1$ to $n$ in the same blocks. Then, send $1$ to the block containing $k$, send $2$ to the block containing $k-1$, $\ldots$, and send $k$ to the block containing $1$. In other words, evacuation is left multiplication by $\left(\begin{smallmatrix} 
    1 & 2 & \cdots & k\\
    k & k-1 & \cdots & 1
    \end{smallmatrix}\right)$. Figure \ref{fig:eva-neck} shows an example of the correspondence between applying $q_4$ to the linear extension and applying $\left(\begin{smallmatrix} 
    1 & 2 & 3 & 4 & 5\\
    5 & 4 & 3 & 2 & 1
    \end{smallmatrix}\right)$ to the ordered set partition.

    \begin{figure}[h!]
        \centering
        \includegraphics[width = 0.7\textwidth]{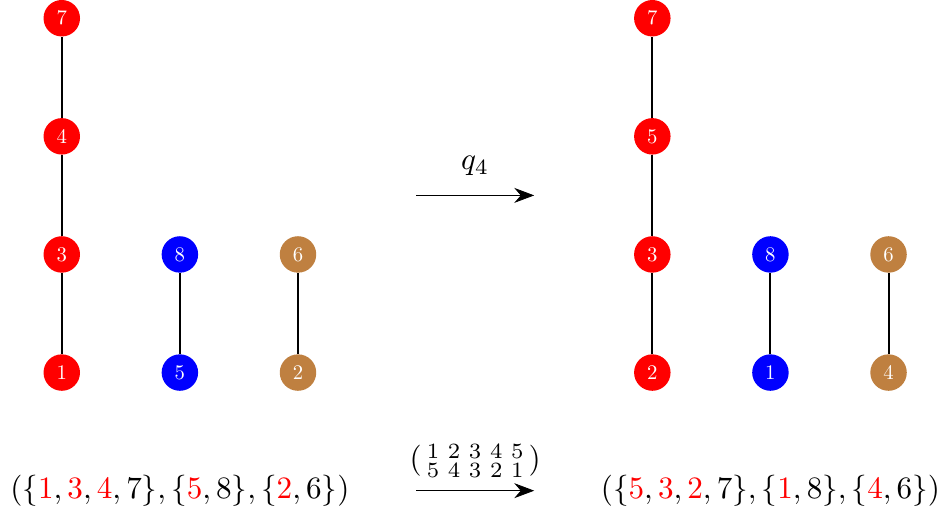}
        \caption{Evacuation $q_4$ on a linear extension and the corresponding ordered set partition}
        \label{fig:eva-neck}
    \end{figure}

\section{Proof of main theorem}\label{sec:main-thm}

    In this section, we will prove the main theorem. First of all, our main concern will be posets $R = P~\oplus~D_\lambda~\oplus~Q$ where $D_\lambda\in \mathfrak{D}_n$, and $P$ and $Q$ are any finite posets with $|P| = p$ and $|Q| = q$. Since $R$ is an ordinal sum, for any linear extension $f$ of $R$, $f^{-1}(P) = \{1,\ldots,p\}$, $f^{-1}(D_\lambda) = \{p+1,\ldots,p+n\}$, and $f^{-1}(Q) = \{p+n+1,\ldots,p+n+q\}$. Thus, a linear extension $f$ of $R$ induces a linear extension $g$ of $D_\lambda$ defined by $g(x) = f(x)-p$ for $x\in D_\lambda$.
    
    \begin{definition}\label{def:induced_map}
        We call the induced map defined above $I$. 
    \end{definition}

    \begin{figure}[h!]
        \centering
        \includegraphics[width = 0.7\textwidth]{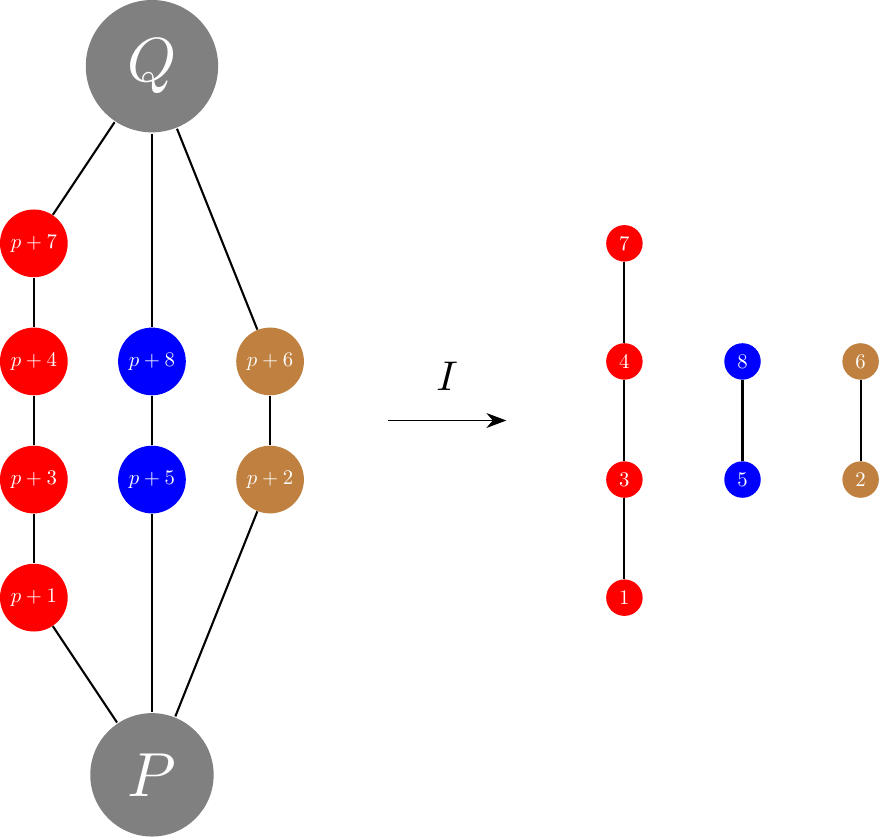}
        \caption{An example of the induced map $I$}
        \label{fig:I-ex}
    \end{figure}

    Figure \ref{fig:I-ex} gives an example of this map. The goal of the main theorem is to characterize the conditions on $p$ and $q$ such that $(t_iq_{jk})^2$ fixes the labels of the elements in $D_\lambda$ for all $i+1<j<k$, that is, $I((t_iq_{jk})^2(f)) = I(f)$ for all $i+1<j<k$. Recall from Section \ref{sec:intro} the following definition.

    \begin{repdef}{\ref{def:cactus-compatible}}
        We say a triple $(p,n,q)$ is \textbf{cactus-compatible} if it satisfies the following condition: Let $P$ and $Q$ be any poset with $|P| = p$ and $|Q| = q$; let $D_\lambda$ be any poset in $\mathfrak{D}_n$. Let $R = P~\oplus~D_\lambda~\oplus~Q$, and let $f$ be any linear extension of $R$. Then for all $i+1<j<k$, the element $(t_iq_{jk})^2$ fixes the labels of the elements in $D_\lambda$ when acting on $f$.
    \end{repdef}
    
    First, we prove the following proposition.

    \begin{prop}\label{thm:main-thm}
        For $n>3$, $(p,n,q)$ is cactus-compatible if and only if
        \begin{enumerate}
            \item $p>q+n-4$, or
            \item $p = q+n-4$ and $q~\text{mod}~n\neq 1,3$, or
            \item $p = q+n-r$ for $r>4$ and $q~\text{mod}~ n > r-1$.
        \end{enumerate}
        In particular, if $p\leq q-1$, $(p,n,q)$ is not cactus-compatible.
    \end{prop}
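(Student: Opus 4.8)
The plan is to translate the action of $(t_iq_{jk})^2$ on the $D_\lambda$-labels entirely into the language of ordered set partitions, using Lemmas~\ref{lem:D_n-promotion} and~\ref{lem:D_n-evacuation}. By the induced-map formalism (Definition~\ref{def:induced_map}), a linear extension $f$ of $R = P\oplus D_\lambda\oplus Q$ restricts to a linear extension $g=I(f)$ of $D_\lambda$, which is just an ordered set partition of $\{1,\dots,n\}$ (after the shift by $p$). The key point is that $q_{jk}=q_{k-1}q_{k-j}q_{k-1}$ and $t_i$, when they touch labels in the range occupied by $D_\lambda$, namely $\{p+1,\dots,p+n\}$, act on the corresponding blocks of $g$ by a sequence of cyclic shifts and reversals, exactly as described after Lemmas~\ref{lem:D_n-promotion} and~\ref{lem:D_n-evacuation}; labels outside that range are carried along but their effect on $g$ is only through which of $t_i$'s two swapped labels, if any, falls inside the window. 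So the first step is to write down, as a function of $p,q,n$ and the parameters $i<j-1<k-1$, precisely which permutation of $\{1,\dots,n\}$ the operator $(t_iq_{jk})^2$ induces on $g$.

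**Reducing to a permutation identity.**
Having done that, $(p,n,q)$ is cactus-compatible precisely when, for \emph{all} valid $i,j,k$ and \emph{all} ordered set partitions $g$, the induced permutation is the identity. The crossover between "acts as identity" and "can genuinely permute blocks" is governed by how the evacuation/reversal windows of $q_{jk}$ straddle the $D_\lambda$-window $\{p+1,\dots,p+n\}$; this is what produces the arithmetic conditions $p\gtrless q+n-r$ and the residue conditions $q\bmod n$. Concretely, I would compute the composite map $g\mapsto (t_iq_{jk})^2(g)$ as a word in cyclic shifts $(k\ldots1)$ and reversals $\left(\begin{smallmatrix}1&\cdots&k\\k&\cdots&1\end{smallmatrix}\right)$ restricted to the relevant subinterval of indices, mimicking the worked example $q_{[2,7]}q_{[3,5]}q_{[2,7]}=q_{[4,6]}$ in the introduction; the induced action turns out to be a product of at most two reflections, i.e. again a cyclic shift or reversal of a sub-block of the window, and it is the identity exactly when that shift/reflection is trivial.

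**Extracting the three cases.**
The trichotomy in the statement should fall out by case analysis on $r := q+n-p$ (equivalently on how deep the outer window $[\,\cdot\,,k-1]$ of $q_{jk}$ can reach into $D_\lambda$ as $j,k$ range over $i+1<j<k\le p+n+q$). For $r\le 0$ (case $p>q+n-4$ essentially, after accounting for the $i+1<j$ constraint costing a few slots) the reversal windows never create a nontrivial rearrangement of the blocks, giving cactus-compatibility unconditionally. At the threshold $r=4$ one gets a single potentially-nontrivial transposition of two adjacent blocks that is killed exactly when $q\bmod n\notin\{1,3\}$ — these being the residues for which the two affected labels land in the same block or in a symmetric position. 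For $r>4$ the obstruction survives unless $q\bmod n>r-1$, i.e. unless the residue is large enough that the offending labels again collapse into one block. The "in particular $p\le q-1$ fails" clause is the special case where $r\ge n+1$, so the window conditions can never be met. The main obstacle I anticipate is purely bookkeeping: correctly tracking the shift-by-$p$ between $f$ and $g$ together with the off-by-one in the definition of evacuation ($q_i$ comprises $i+1$ promotions), so that the window endpoints and the residue $q\bmod n$ come out with exactly the right constants $4$, $r-1$, and the excluded set $\{1,3\}$; I would verify these constants against small cases ($n=4$, small $p,q$) before committing to the general argument.
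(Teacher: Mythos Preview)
Your outline is essentially the paper's own proof: reduce to permutations of $\{1,\dots,n\}$ via the ordered-set-partition model, compute that $q_{jk}$ acts as the single reversal of the cyclic interval $[m-\ell,m]$ (with $m=(k-p-n)\bmod n$, $\ell=k-j-p$), and then do a case analysis on $r=q+n-p$ to decide when the transposition coming from $t_i$ commutes with that reversal. One small correction to your heuristic: the excluded residues $\{1,3\}$ at $r=4$ arise because in that boundary case both $t_i$ and $q_{jk}$ become adjacent transpositions, namely $(1\,2)$ and $(m{-}1\,\,m)$ with $m=q\bmod n$, and two transpositions commute iff they are equal ($m=2$) or disjoint ($m\ge 4$); it has nothing to do with labels ``landing in the same block.''
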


    We will need a few lemmas to prove this proposition.

    \begin{lemma}\label{lem:t_i-fixes}
        The only moves that affect the labels in $D_\lambda$ are $t_{p+1},\ldots,t_{p+n-1}$.
    \end{lemma}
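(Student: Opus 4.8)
The plan is to prove Lemma~\ref{lem:t_i-fixes} directly from the definition of the Bender--Knuth involution $t_m$ on linear extensions, together with the structure of the poset $R = P \oplus D_\lambda \oplus Q$. Recall that for any linear extension $f$ of $R$, the rigidity of the ordinal sum forces $f^{-1}(P) = \{1,\ldots,p\}$, $f^{-1}(D_\lambda) = \{p+1,\ldots,p+n\}$, and $f^{-1}(Q) = \{p+n+1,\ldots,p+n+q\}$ (this was noted at the start of Section~\ref{sec:main-thm}). The key observation is that $t_m$ only ever inspects and possibly swaps the two labels $m$ and $m+1$, and it acts nontrivially only when the elements carrying those labels are incomparable in $R$.

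First I would treat the ``obvious'' direction: if $m \le p-1$, then both labels $m$ and $m+1$ lie on elements of $P$ (since $f^{-1}(P) = \{1,\ldots,p\}$ contains both $m$ and $m+1$), so $t_m$ can only permute labels within $P$ and fixes every label on $D_\lambda \cup Q$; in particular it fixes the labels on $D_\lambda$. Symmetrically, if $m \ge p+n+1$, then both $m$ and $m+1$ lie on elements of $Q$, so $t_m$ fixes all labels on $P \cup D_\lambda$. The remaining potentially problematic indices are thus $m \in \{p, p+1, \ldots, p+n\}$.

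Next I would rule out the two boundary indices $m = p$ and $m = p+n$ using comparability. For $m = p$: the label $p$ sits on an element $a \in P$ and the label $p+1$ sits on an element $b \in D_\lambda$; by the definition of the ordinal sum, every element of $P$ is $\le_R$ every element of $D_\lambda$, so $a <_R b$, hence $a$ and $b$ are comparable and $t_p$ fixes $f$. For $m = p+n$: the label $p+n$ sits on an element of $D_\lambda$ and the label $p+n+1$ on an element of $Q$, and again every element of $D_\lambda$ is $\le_R$ every element of $Q$, so these two elements are comparable and $t_{p+n}$ fixes $f$. This leaves exactly the indices $p+1, p+2, \ldots, p+n-1$, which is the claimed set, and for these indices both labels $m, m+1$ lie in $\{p+1,\ldots,p+n\} = f^{-1}(D_\lambda)$, so such a move can only permute labels among the elements of $D_\lambda$ — consistent with the statement that these are the only moves that \emph{can} affect those labels.

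I do not anticipate a genuine obstacle here; the lemma is a bookkeeping fact and the whole argument is a short case analysis on the index $m$ against the three intervals $\{1,\ldots,p\}$, $\{p+1,\ldots,p+n\}$, $\{p+n+1,\ldots,p+n+q\}$ determined by the ordinal-sum structure, with the two boundary cases dispatched by the total comparability between consecutive ordinal summands. The only point requiring a little care is to state precisely what ``affect the labels in $D_\lambda$'' means: $t_m$ for $m \notin \{p+1,\ldots,p+n-1\}$ restricts to the identity on the label set of $D_\lambda$ (either because it fixes $f$ entirely near the boundary, or because it operates entirely inside $P$ or inside $Q$), whereas for $m$ in that range it need not. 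I would write the proof in that framing so that downstream uses — where one repeatedly composes $t_i$'s inside $q_{jk}$ and tracks only the $D_\lambda$-labels — can invoke it cleanly.
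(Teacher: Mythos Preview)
Your proposal is correct and follows essentially the same approach as the paper's own proof: both use the ordinal-sum structure to confine labels to the three intervals, dispose of indices entirely inside $P$ or $Q$ by noting the relevant labels lie outside $D_\lambda$, and handle the boundary indices $m=p$ and $m=p+n$ by observing the two labeled elements are comparable. Your write-up is slightly more explicit about the case split, but the argument is the same.
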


    \begin{proof}
        Since $R$ is an ordinal sum, for any linear extension $f$ of $R$, $f^{-1}(P) = \{1,\ldots,p\}$, $f^{-1}(D_\lambda) = \{p+1,\ldots,p+n\}$, and $f^{-1}(Q) = \{p+n+1,\ldots,p+n+q\}$. Thus, $t_i$ does not affect the labels of the elements in $D_\lambda$ for $i<p$ and $i>p+n+1$. Furthermore, since $f^{-1}(p)\in P$ and $f^{-1}(p+1)\in D_\lambda$, $p$ and $p+1$ always label comparable elements, so $t_p$ does nothing on $f$. Similarly, $t_{p+n}$ also does nothing. Hence, the only moves that affect the labels in $D_\lambda$ are $t_{p+1},\ldots,t_{p+n-1}$.
    \end{proof}

    \begin{lemma}\label{lem:q_k-labels}
        Evacuation $q_{k-1}$ affects the labels in $D_\lambda$ as follows.
        \begin{enumerate}
            \item If $k-1\leq p$, $I(q_{k-1}(f)) = I(f)$.
            \item If $p<k-1<p+n$, $I(q_{k-1}(f)) = q_{k-1-p}(I(f))$.
            \item If $p+n\leq k-1$, $I(q_{k-1}(f)) = q_{n-1}\partial_n^{k-p-n}(I(f))$.
        \end{enumerate}
    \end{lemma}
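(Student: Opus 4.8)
The strategy is to track how each round of promotion inside the evacuation operator $q_{k-1} = \partial_k\partial_{k-1}\cdots\partial_1$ moves the labels sitting on elements of $D_\lambda$, using Lemma~\ref{lem:t_i-fixes} together with Lemmas~\ref{lem:D_n-promotion} and~\ref{lem:D_n-evacuation}. Recall that $f^{-1}(D_\lambda) = \{p+1,\ldots,p+n\}$, so $I(f)$ records exactly the relative positions (as an ordered set partition) of the labels $p+1,\ldots,p+n$.

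\textbf{Case (1): $k-1\le p$.} Here $q_{k-1}$ is built out of promotions $\partial_j$ with $j\le k\le p+1$, and in fact only $\partial_1,\ldots,\partial_k$ with $k\le p+1$ appear. By Lemma~\ref{lem:t_i-fixes}, none of the moves $t_1,\ldots,t_{p}$ affect labels in $D_\lambda$; since each $\partial_j$ with $j\le p+1$ is a word in $t_1,\ldots,t_{p}$ (as $\partial_j = t_{j-1}\cdots t_1$), I conclude $q_{k-1}$ fixes all labels on $D_\lambda$, i.e. $I(q_{k-1}(f)) = I(f)$. (One must be slightly careful at $j=p+1$: $\partial_{p+1}$ involves $t_p$, which does nothing as shown in Lemma~\ref{lem:t_i-fixes}, so this is fine.)

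\textbf{Case (2): $p < k-1 < p+n$.} Now $q_{k-1}$ involves promotions $\partial_k,\ldots,\partial_1$, and only those with index $>p$, namely $\partial_{p+2},\ldots,\partial_k$, can touch $D_\lambda$ (again $\partial_{p+1}$ acts trivially on $D_\lambda$ since it ends in $t_p$). Restricted to the labels $\{p+1,\ldots,p+n\}$, the promotion $\partial_j$ (for $p+1<j\le k<p+n+1$) does the following: it fixes labels $>j$, and cyclically shifts the labels $p+1,\ldots,j$ downward, sending $p+1$ up to position $j$ — which, after relabeling by subtracting $p$, is precisely the promotion $\partial_{j-p}$ acting on $I(f)$ as an ordered set partition of $\{1,\ldots,n\}$ (compare Lemma~\ref{lem:D_n-promotion}). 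The point is that the label $p+1$ in $R$ plays the role of the minimal label "$1$" of the induced extension, and since $p+1$ labels a minimal element of $D_\lambda$ which is no longer minimal in $R$, its promotion chain stays entirely inside $D_\lambda$. Composing these, $q_{k-1}$ restricted to $D_\lambda$ equals $\partial_{k-p}\partial_{k-1-p}\cdots\partial_1 = q_{k-1-p}$ acting on $I(f)$, which is exactly statement~(2).

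\textbf{Case (3): $p+n\le k-1$.} This is the main obstacle, since now some promotion rounds have index exceeding $p+n$ and their promotion chains can leave $D_\lambda$ entirely, passing through $Q$. The plan is: among the rounds $\partial_k,\partial_{k-1},\ldots,\partial_1$, the ones with index $>p+n+1$ — there are $k-p-n-1$ of them if $k>p+n+1$, i.e. $\partial_{p+n+2},\ldots,\partial_k$ — each move, restricted to $D_\lambda$, as the single cyclic shift $\partial_n$ of $I(f)$: indeed by Lemma~\ref{lem:D_n-promotion}, $\partial_j$ for $j>p+n$ sends the label-$(p+i)$ chain to hold $p+i-1$ for $i\ge2$ and the label-$(p+1)$ chain to hold $p+n$, which after subtracting $p$ is $\partial_n$ on $I(f)$. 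Then $\partial_{p+n+1}$ acts as $\partial_n$ as well (the promotion chain for removing label $p+1$ terminates at a maximal element of $D_\lambda$, or continues into $Q$ but the net effect on $D_\lambda$-labels is the cyclic shift). After these come $\partial_{p+n},\partial_{p+n-1},\ldots,\partial_1$, and by the analysis of Case~(2)/Case~(1) these compose to $q_{n-1}$ on $I(f)$. Counting, the number of $\partial_n$-type rounds before the terminal $q_{n-1}$ block is $k - (p+n) = k-p-n$, giving $I(q_{k-1}(f)) = q_{n-1}\,\partial_n^{\,k-p-n}(I(f))$. The delicate point I expect to dwell on is verifying that a promotion round $\partial_j$ with $j\ge p+n+1$, even though its promotion chain may exit $D_\lambda$ and run up into $Q$, nonetheless acts on the labels within $D_\lambda$ exactly as the cyclic shift $(n\,\ldots\,2\,1)$ on $I(f)$ — this needs the observation that the portion of the promotion chain inside $D_\lambda$ is a full chain $C_{\lambda_j}$ from some minimal to some maximal element (since $D_\lambda$'s Hasse diagram has no branching), so the "slide down" within $D_\lambda$ behaves just as in the proof of Lemma~\ref{lem:D_n-promotion}, and the labels $\le p+n$ get decremented by one uniformly in the final step, which is the cyclic shift after the identification $x\mapsto x-p$.
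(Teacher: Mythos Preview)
Your proposal is correct and lands on the same decomposition as the paper, but you argue more combinatorially (tracking promotion chains through $P$, $D_\lambda$, $Q$) where the paper argues purely algebraically. The paper simply writes $q_{k-1}$ as a word in the $t_i$ and invokes Lemma~\ref{lem:t_i-fixes} to delete every $t_i$ with $i\le p$ or $i\ge p+n$; what remains is, in Case~(2), the word $t_{p+1}(t_{p+2}t_{p+1})\cdots(t_{k-1}\cdots t_{p+1})$, and in Case~(3), that same word for $k=p+n$ followed by $(t_{p+n-1}\cdots t_{p+1})^{k-p-n}$. Shifting indices down by $p$ immediately identifies these with $q_{k-1-p}$ and $q_{n-1}\partial_n^{k-p-n}$ on $I(f)$. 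This sidesteps entirely the ``delicate point'' you flag in Case~(3): there is no need to follow the promotion chain into $Q$, because each $t_i$ with $i\ge p+n$ is already the identity on $D_\lambda$-labels. Your promotion-chain argument can be made rigorous, but note that in Case~(2) the chain for $\partial_j$ actually begins at the element labeled $1$ in $P$, not at the element labeled $p+1$; the correct statement is that once the chain enters $D_\lambda$ it does so at the element labeled $p+1$ and then stays in a single chain of $D_\lambda$. The algebraic approach avoids having to say even this much.
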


    \begin{proof}
        1) is clear since if $k-1\leq p$, $q_{k-1}$ only consists of $t_1,\ldots,t_p$, which do not affect the labels in $D_\lambda$.

        To prove 2), note that in this case, $q_{k-1} = \partial_1\ldots\partial_p\partial_{p+1}\ldots\partial_k$. Again, $\partial_1\ldots\partial_p$ only consist of $t_1,\ldots,t_p$, so they do not affect the labels in $D_\lambda$. On the other hand, in $\partial_{p+v} = t_{p+v-1}\ldots t_{p+1}t_p\ldots t_1$, the first $p$ terms $t_p\ldots t_1$ do not affect the labels in $D_\lambda$ either. Thus, the only terms in $\partial_{p+v}$ that affect the labels in $D_\lambda$ are $t_{p+v-1}\ldots t_{p+1}$, so the terms in $q_{k-1}$ that actually affect the labels in $D_\lambda$ are $t_{p+1}(t_{p+2}t_{p+1})\ldots(t_{k-1}\ldots t_{p+1})$. On $I(f)$, this is equivalent to $t_1(t_2t_1)\ldots(t_{k-1-p}\ldots t_1)$, which is $q_{k-1-p}$.

        Proving 3) is similar. Note that in this case, $q_{k-1} = \partial_1\ldots\partial_p \partial_{p+1}\ldots \partial_{p+n}\partial_{p+n+1}\ldots\partial_k$. Removing the terms that do not affect the labels in $D_\lambda$, we obtain
        \[ t_{p+1}(t_{p+2}t_{p+1})\ldots(t_{p+n-1}\ldots t_{p+1}) (t_{p+n-1}\ldots t_{p+1})^{k-p-n} \]
        where the first part $t_{p+1}(t_{p+2}t_{p+1})\ldots(t_{p+n-1}\ldots t_{p+1})$ comes from $\partial_{p+1}\ldots \partial_{p+n}$ and the second part $(t_{p+n-1}\ldots t_{p+1})^{k-p-n}$ comes from $\partial_{p+n+1}\ldots\partial_k$. On $I(f)$, the first part is equivalent to $t_1(t_2t_1)\ldots(t_{n-1}\ldots t_1)$, which is $q_{n-1}$, while the second part is equivalent to $(t_{n-1}\ldots t_1)^{k-p-n}$, which is $\partial_n^{k-p-n}$. This gives the desired product.
    \end{proof}

    Recall from Section \ref{sec:pro-eva} that we can associate each linear extension of $D_\lambda$ with an ordered set partition of $n$ numbers. Furthermore, $\partial_k$ is equivalent to left multiplication by $(k\ldots 21)$ while $q_{k-1}$ is equivalent to left multiplication by $\left(\begin{smallmatrix} 
    1 & 2 & \cdots & k\\
    k & k-1 & \cdots & 1
    \end{smallmatrix}\right)$. We claim that studying the action of promotion and evacuation on the ordered set partitions is enough to prove Proposition \ref{thm:main-thm}.

    \begin{lemma}\label{lem:neck-iff}
        $(t_iq_{jk})^2$ fixes the labels in $D_\lambda$ for any linear extension $f$ of $R$ if and only if it is equivalent to the identity permutation on ordered set partitions.
    \end{lemma}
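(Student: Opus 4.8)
The plan is to show that the action of $(t_iq_{jk})^2$ on the labels of $D_\lambda$ factors through the induced map $I$: there is a single operator $\Phi$ on linear extensions of $D_\lambda$ -- equivalently on ordered set partitions $(L_1,\dots,L_\ell)$ of $\{1,\dots,n\}$ with $|L_r|=\lambda_r$ -- such that $I\big((t_iq_{jk})^2(f)\big)=\Phi\big(I(f)\big)$ for every $f\in\mathcal L(R)$, and moreover $\Phi$ is exactly the operator on ordered set partitions obtained by composing the left-multiplications by cycles and reversals of Lemmas~\ref{lem:D_n-promotion} and~\ref{lem:D_n-evacuation} together with the conditional swaps coming from the Bender--Knuth moves that survive the reduction in Lemma~\ref{lem:q_k-labels}. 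Granting this, the equivalence is immediate: $(t_iq_{jk})^2$ fixes the labels of $D_\lambda$ on a given $f$ iff $I\big((t_iq_{jk})^2(f)\big)=I(f)$, i.e.\ iff $\Phi$ fixes $I(f)$; since every admissible ordered set partition is $I(f)$ for some $f$, this holds for all $f\in\mathcal L(R)$ iff $\Phi$ fixes every ordered set partition, which is precisely what the statement calls being ``equivalent to the identity permutation on ordered set partitions''.

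To construct $\Phi$ I would repeat, for the whole word $(t_iq_{jk})^2$, the argument used to prove Lemma~\ref{lem:q_k-labels}. First, $t_p$ and $t_{p+n}$ act as the identity on every linear extension of $R$, since in an ordinal sum an element of $P$ (resp.\ $D_\lambda$) is always comparable to an element of $D_\lambda$ (resp.\ $Q$). Expanding $q_{k-1}=\partial_1\cdots\partial_k$ and $\partial_m=t_{m-1}\cdots t_1$ and using $t_at_b=t_bt_a$ for $|a-b|\ge 2$, every generator $t_a$ with $a\le p-1$ or $a\ge p+n$ can be slid past all of $t_{p+1},\dots,t_{p+n-1}$ and collected at the front of the word; by Lemma~\ref{lem:t_i-fixes} these discarded generators never touch the labels on $D_\lambda$, so removing them does not change $I$ of the output. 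What remains is a word in $t_{p+1},\dots,t_{p+n-1}$, and on $D_\lambda$ the move $t_{p+a}$ swaps the labels $p+a$ and $p+a+1$ exactly when they lie on different chains and fixes them otherwise -- a rule depending only on the current labelling of $D_\lambda$, hence only on $I(f)$. Reading chains as blocks this is a conditional transposition of the values $a,a+1$ on ordered set partitions, while the evacuation factors $q_{k-1}$ inside $q_{jk}$ contribute the left-multiplications by reversals (and, in case~3 of Lemma~\ref{lem:q_k-labels}, by powers of the cycle $(n\,\cdots\,2\,1)$) recorded there. Composing these yields the desired $\Phi$ and identifies it with the corresponding operator on ordered set partitions.

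It remains to observe that $I$ is surjective onto the admissible ordered set partitions: given $(L_1,\dots,L_\ell)$ with $|L_r|=\lambda_r$, concatenate any linear extension of $P$ on $\{1,\dots,p\}$, the linear extension of $D_\lambda$ corresponding to $(L_1{+}p,\dots,L_\ell{+}p)$, and any linear extension of $Q$ on $\{p{+}n{+}1,\dots,p{+}n{+}q\}$; because $R=P\oplus D_\lambda\oplus Q$ is an ordinal sum this is a genuine linear extension of $R$, and it maps to $(L_1,\dots,L_\ell)$ under $I$. I expect the middle step to be the only real obstacle -- namely, confirming that sliding out and discarding the generators that act inside $P$ and $Q$ leaves the net effect on $D_\lambda$ untouched. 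This is exactly the commutation bookkeeping already carried out for Lemma~\ref{lem:q_k-labels}; once it is in place, the rest of the argument is formal.
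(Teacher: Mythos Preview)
Your argument is correct and rests on the same core idea as the paper's: the action of $(t_iq_{jk})^2$ on the $D_\lambda$-labels factors through $I$, so fixing all $f$ is equivalent to $\Phi$ fixing every ordered set partition. The paper, however, is considerably more direct. By the time Lemma~\ref{lem:neck-iff} is stated, Lemmas~\ref{lem:D_n-promotion}, \ref{lem:D_n-evacuation} and~\ref{lem:q_k-labels} have already established that each factor of $(t_iq_{jk})^2$ acts on the ordered set partition as left multiplication by a \emph{fixed} element of $\mathfrak S_n$; in particular, your ``conditional transposition'' coming from $t_{p+a}$ is literally left multiplication by $(a,a{+}1)$, since swapping two elements of the same block does nothing to a \emph{set} partition. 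Hence the whole operator is left multiplication by a single permutation $w$, and the paper's proof is two lines: if $w=\mathrm{id}$ the blocks are unchanged, and each chain's labeling is determined by its block; if $w\neq\mathrm{id}$, pick $\ell$ with $w(\ell)\neq\ell$ and place $\ell$ and $w(\ell)$ in different blocks (possible because $D_\lambda$ has at least two chains by the definition of $\mathfrak D_n$). Your sliding/commutation bookkeeping and the explicit surjectivity of $I$ are valid but re-derive what the preceding lemmas have already packaged; the one simplification you should absorb is that the ``conditional'' swap on ordered set partitions is unconditional, which is exactly what lets the paper speak of a single permutation $w$ rather than a state-dependent operator $\Phi$.
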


    \begin{proof}
        If $(t_iq_{jk})^2$ is equivalent to the identity permutation on ordered set partitions, then for any linear extension $f$, let $L_m$ be the set of numbers labeling the $m$th chain of $D_\lambda$ in $I(f)$. Since $(t_iq_{jk})^2$ is the identity permutation, after $(t_iq_{jk})^2$, $L_m$ is the same set. $L_m$ uniquely determines the labeling of the $m$th chain, so $(t_iq_{jk})^2$ fixes $I(f)$. Thus, it fixes the labels of $D_\lambda$ in $f$.

        Conversely, if $(t_iq_{jk})^2$ is equivalent to $w \neq \text{id}$, then there exists a number $\ell$ such that $\ell \neq w(\ell)$. Since $D_\lambda$ has at least two chains, the ordered set partitions have at least two blocks. Thus, we can put $\ell$ and $w(\ell)$ in two different blocks. This gives a linear extension that is not fixed by $(t_iq_{jk})^2$ because the label $w(\ell)$ is in different chains before and after $(t_iq_{jk})^2$.
    \end{proof}
    
    From Lemma \ref{lem:q_k-labels}, we can easily deduce the effect of $q_{k-1}$ on the ordered set partition corresponding to $I(f)$.

    \begin{lemma}\label{lem:q_k-neck}
        Evacuation $q_{k-1}$ affects the ordered set partition corresponding to $I(f)$ as follows.
        \begin{enumerate}
            \item If $k-1\leq p$, $q_{k-1}$ does nothing to the ordered set partition.
            \item If $p<k-1<p+n$, $q_{k-1}$ is left multiplication by $\left(\begin{smallmatrix} 
            1 & 2 & \cdots & k-p\\
            k-p & k-p-1 & \cdots & 1
            \end{smallmatrix}\right)$.
            \item If $p+n\leq k-1$, $q_{k-1}$ is left multiplication by $\left(\begin{smallmatrix} 
            1 & 2 & \cdots & n\\
            n & n-1 & \cdots & 1
            \end{smallmatrix}\right)(n\ldots 21)^{k-p-n}$.
        \end{enumerate}
    \end{lemma}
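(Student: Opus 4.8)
The plan is to deduce Lemma~\ref{lem:q_k-neck} directly from Lemma~\ref{lem:q_k-labels} by transporting each of the three cases through the dictionary established in Section~\ref{sec:pro-eva}, namely that a linear extension of $D_\lambda$ corresponds to an ordered set partition of $\{1,\ldots,n\}$, that $\partial_k$ on linear extensions corresponds to left multiplication by the cycle $(k\ldots 21)$ on ordered set partitions, and that $q_{k-1}$ on linear extensions corresponds to left multiplication by the reversal permutation $\left(\begin{smallmatrix} 1 & 2 & \cdots & k\\ k & k-1 & \cdots & 1\end{smallmatrix}\right)$. Since $I(f)$ is itself a linear extension of $D_\lambda$ (after shifting labels down by $p$), I can apply these identifications verbatim to $I(f)$; the only bookkeeping is to track how the index shifts through the parameter changes appearing in Lemma~\ref{lem:q_k-labels}.

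For case~(1), Lemma~\ref{lem:q_k-labels} says $I(q_{k-1}(f)) = I(f)$ when $k-1 \le p$, so $q_{k-1}$ fixes the ordered set partition, i.e.\ it acts as the identity permutation. For case~(2), when $p < k-1 < p+n$, Lemma~\ref{lem:q_k-labels} gives $I(q_{k-1}(f)) = q_{k-1-p}(I(f))$; applying the evacuation-to-reversal dictionary with parameter $k-1-p$ in place of $k-1$ (so that the ``$k$'' in the reversal is $k-p$) yields that $q_{k-1}$ acts as left multiplication by $\left(\begin{smallmatrix} 1 & 2 & \cdots & k-p\\ k-p & k-p-1 & \cdots & 1\end{smallmatrix}\right)$. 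For case~(3), when $p+n \le k-1$, Lemma~\ref{lem:q_k-labels} gives $I(q_{k-1}(f)) = q_{n-1}\partial_n^{\,k-p-n}(I(f))$; translating $q_{n-1}$ into the length-$n$ reversal $\left(\begin{smallmatrix} 1 & 2 & \cdots & n\\ n & n-1 & \cdots & 1\end{smallmatrix}\right)$ and each $\partial_n$ into the cycle $(n\ldots 21)$, and noting that left multiplication composes contravariantly with the order of application but that here the two operators are applied as a single word $q_{n-1}\partial_n^{k-p-n}$ acting on the ordered set partition, gives left multiplication by $\left(\begin{smallmatrix} 1 & 2 & \cdots & n\\ n & n-1 & \cdots & 1\end{smallmatrix}\right)(n\ldots 21)^{k-p-n}$.

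The only genuine subtlety is making sure the composition order is right: the dictionary says ``apply $\partial_k$ to the linear extension'' $=$ ``left-multiply the ordered set partition by $(k\ldots21)$,'' so a composite operator $AB$ acting on a linear extension (meaning: first $B$, then $A$) corresponds to left-multiplying by (matrix of $A$)(matrix of $B$), which is exactly the order written in the statement for case~(3). I would state this once explicitly — that the correspondence $\mathcal{L}(D_\lambda) \to \{\text{ordered set partitions}\}$ intertwines the right action given by composing promotion/evacuation operators with the left-multiplication action of $\mathfrak{S}_n$ — and then the three cases are immediate rewrites of Lemma~\ref{lem:q_k-labels}. I do not expect any real obstacle here; this lemma is purely a change of language, and its role is to set up the subsequent permutation-group computation that proves Proposition~\ref{thm:main-thm} via Lemma~\ref{lem:neck-iff}.
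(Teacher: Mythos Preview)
Your proposal is correct and is essentially the same approach the paper takes: the paper does not even write out a proof for this lemma, stating only that it follows directly from Lemma~\ref{lem:q_k-labels} via the promotion/evacuation--to--permutation dictionary of Section~\ref{sec:pro-eva}. Your write-up is in fact more careful than the paper, since you explicitly verify the composition order in case~(3).
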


    Thus, we have a nice description of the action of $q_{jk}$ for some value of $j<k$.

    \begin{corollary}\label{cor:q_jk-neck}
        Suppose $k = p+n+r$ for some $r\geq 0$ and $p < k-j < p+n$. Let $m = r~\text{mod}~n$ and $\ell = k-j-p$. Then $q_{jk}$ is left multiplication by
        \[ \left(\begin{smallmatrix} 
            m-\ell & m-\ell+1 & \cdots & m-1 & m\\
            m & m-1 & \cdots & m-\ell+1 & m-\ell
            \end{smallmatrix}\right) ,\]
        where all values in the matrix are taken mod $n$.
    \end{corollary}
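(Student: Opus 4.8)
The plan is to reduce the statement to a one-line computation in $\mathfrak{S}_n$ via the dictionary of Section~\ref{sec:pro-eva}. A linear extension of $D_\lambda$ is recorded by an ordered set partition of $\{1,\dots,n\}$; each promotion $\partial_k$ acts on it by left multiplication by $(k\ldots 21)$ and each evacuation $q_{k-1}$ by left multiplication by $\left(\begin{smallmatrix}1&\cdots&k\\k&\cdots&1\end{smallmatrix}\right)$ (that is, applying the permutation blockwise), and successive operations compose by composing the underlying bijections of $\{1,\dots,n\}$. Since $q_{jk}=q_{k-1}q_{k-j}q_{k-1}$, it suffices to identify the bijections $A$ and $B$ induced by $q_{k-1}$ and $q_{k-j}$ and then compute $ABA$.

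First I would read $A$ and $B$ off Lemma~\ref{lem:q_k-neck}. The inner factor $q_{k-j}$ has subscript $k-j$, and the hypothesis $p<k-j<p+n$ puts it in case~(2) of that lemma, applied with the lemma's parameter ``$k$'' taken to be $k-j+1$; hence $q_{k-j}$ acts as left multiplication by the reversal $B$ of $\{1,2,\dots,\ell+1\}$, where $\ell=k-j-p$ and $1\le\ell+1\le n$. The outer factor $q_{k-1}$ has subscript $k-1=p+n+r-1$, which lies in case~(2) when $r=0$ (giving the full reversal of $\{1,\dots,n\}$) and in case~(3) when $r\ge 1$ (giving $\left(\begin{smallmatrix}1&\cdots&n\\n&\cdots&1\end{smallmatrix}\right)(n\ldots 21)^{r}$). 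In either case I would record the uniform description of the resulting bijection $A$ as the involution with $A(i)\equiv r+1-i\pmod n$ (the full reversal followed by $i\mapsto i-r$), the $r=0$ case being subsumed by reading $(n\ldots 21)^{0}=\mathrm{id}$.

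Next, since left multiplications compose by composing bijections and $A$ is an involution, $q_{jk}$ acts on the ordered set partition as left multiplication by $ABA=ABA^{-1}$, the conjugate of $B$ by $A$. Conjugating the reversal of $\{1,\dots,\ell+1\}$ by $A$ yields the reversal of the cyclic interval $A(\{1,\dots,\ell+1\})$, which from $A(i)\equiv r+1-i$ equals $\{r-\ell,r-\ell+1,\dots,r\}$ mod $n$; moreover $ABA$ sends $A(i)\mapsto A(\ell+2-i)$. Putting $y=A(i)$, so that $i\equiv r+1-y$, this becomes $y\mapsto 2r-\ell-y\pmod n$ on that interval and the identity elsewhere. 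Since $m\equiv r\pmod n$, this is exactly left multiplication by $\left(\begin{smallmatrix}m-\ell&m-\ell+1&\cdots&m\\m&m-1&\cdots&m-\ell\end{smallmatrix}\right)$ with entries read mod $n$, as claimed.

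The one step that needs care is the index bookkeeping. Because Lemma~\ref{lem:q_k-neck} is phrased for $q_{k-1}$, extracting the action of $q_{k-j}$ forces the substitution ``$k$'' $\mapsto k-j+1$, and this is precisely what produces the reversed block $\{1,\dots,\ell+1\}$ rather than $\{1,\dots,\ell\}$; similarly one must check that $r=0$ is handled by case~(2) yet is still given by the formula with $(n\ldots 21)^{r}$. Once $A$ and $B$ are pinned down, the remaining ``conjugate a block-reversal by a reflection'' computation is routine.
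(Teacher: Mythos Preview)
Your argument is correct and follows essentially the same route as the paper: both reduce $q_{jk}=q_{k-1}q_{k-j}q_{k-1}$ via Lemma~\ref{lem:q_k-neck} to a product of explicit permutations in $\mathfrak{S}_n$ and then compute. Your packaging as a conjugation $ABA=ABA^{-1}$ of the block-reversal $B$ by the affine involution $A(i)\equiv r+1-i$ is a tidier way to reach the answer than the paper's step-by-step element tracking, but the underlying computation is the same.
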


    \begin{proof}
        By Lemma \ref{lem:q_k-labels}, $q_{jk} = q_{k-1}q_{k-j}q_{k-1}$ is equivalent to $q_{n-1}\partial_n^r q_{\ell} q_{n-1}\partial_n^r$ on $I(f)$. By Lemma \ref{lem:q_k-neck}, this is equivalent to left multiplication by
        \[ \left(\begin{smallmatrix} 
            1 & 2 & \cdots & n\\
            n & n-1 & \cdots & 1
            \end{smallmatrix}\right)(n\ldots 21)^{r} 
            \left(\begin{smallmatrix} 
            1 & 2 & \cdots & \ell+1\\
            \ell+1 & \ell+2 & \cdots & 1
            \end{smallmatrix}\right)
            \left(\begin{smallmatrix} 
            1 & 2 & \cdots & n\\
            n & n-1 & \cdots & 1
            \end{smallmatrix}\right)(n\ldots 21)^{r}. \]
        Since $(n\ldots 21)^n = 1$, we can simplify this to
        \[ \left(\begin{smallmatrix} 
            1 & 2 & \cdots & n\\
            n & n-1 & \cdots & 1
            \end{smallmatrix}\right)(n\ldots 21)^m 
            \left(\begin{smallmatrix} 
            1 & 2 & \cdots & \ell+1\\
            \ell+1 & \ell+2 & \cdots & 1
            \end{smallmatrix}\right)
            \left(\begin{smallmatrix} 
            1 & 2 & \cdots & n\\
            n & n-1 & \cdots & 1
            \end{smallmatrix}\right)(n\ldots 21)^m. \]
        Showing that this is equal to $\left(\begin{smallmatrix} 
            m-\ell & m-\ell+1 & \cdots & m-1 & m\\
            m & m-1 & \cdots & m-\ell+1 & m-\ell
            \end{smallmatrix}\right)$ is just a matter of keeping track of the permutation.

        \begin{center}
        
            \begin{tikzpicture}
                \node (p1) at (0,10) {$1,\ldots,m-\ell,\ldots,m,m+1,\ldots,n$};
                
                \node (p2) at (0,8) {$n-m+1,\ldots,n-\ell,\ldots,n,1,\ldots,n-m$};
                
                \node (p3) at (0,6) {$m,\ldots,\ell+1,\ldots,1,n,\ldots,m+1$};
                
                \node (p4) at (0,4) {$m,\ldots,1,\ldots,\ell+1,n,\ldots,m+1$};
                
                \node (p5) at (0,2) {$n,\ldots,n+1-m,\ldots,n+\ell+1-m,n-m,\ldots,1$};
                
                \node (p6) at (0,0) {$1,\ldots,m,\ldots,m-\ell,m+1,\ldots,n$};

                \draw[->] (p1)--(p2) node[midway,right] {%
                $(n\ldots 21)^m$};

                \draw[->] (p2)--(p3) node[midway,right] {%
                $\left(\begin{smallmatrix} 
                1 & 2 & \cdots & n\\
                n & n-1 & \cdots & 1
                \end{smallmatrix}\right)$};

                \draw[->] (p3)--(p4) node[midway,right] {%
                $\left(\begin{smallmatrix} 
                1 & 2 & \cdots & \ell+1\\
                \ell+1 & \ell+2 & \cdots & 1
                \end{smallmatrix}\right)$};

                \draw[->] (p4)--(p5) node[midway,right] {%
                $(n\ldots 21)^m$};

                \draw[->] (p5)--(p6) node[midway,right] {%
                $\left(\begin{smallmatrix} 
                1 & 2 & \cdots & n\\
                n & n-1 & \cdots & 1
                \end{smallmatrix}\right)$};
            \end{tikzpicture}
            
        \end{center}
    \end{proof}

    For example, the action of $q_{jk}$ on $D_{(4,2,2)}$ where $m = 2$ and $\ell = 5$ is left multiplication by $\left(\begin{smallmatrix} 
    6 & 7 & 8 & 1 & 2\\
    2 & 1 & 8 & 7 & 6
    \end{smallmatrix}\right)$.
    We can think of the action of $q_{jk}$ as ``flipping'' the interval between $m-\ell$ and $m$. On the other hand, $t_i$ is simply flipping $i-p$ and $i-p+1$. Recall that the condition $(t_iq_{jk})^2 = 1$ is equivalent to $t_i$ and $q_{jk}$ commuting. Since $t_i$ and $q_{jk}$ both flip intervals, if they flip disjoint intervals, they do commute. However, if the intervals they flip are not disjoint, most of the time they do not commute. This allows us to prove Proposition \ref{thm:main-thm}.

    \begin{lemma}\label{lem:bound}
        If $i\leq p$ or $k-j\leq p$, then $(t_iq_{jk})^2$ fixes the labels in $D_\lambda$.
    \end{lemma}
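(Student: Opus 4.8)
The plan is to reduce the statement to the fact that $t_i$, $q_{k-1}$, and hence $q_{jk}$ are involutions, by observing that each of them descends to a well-defined self-map of $\mathcal{L}(D_\lambda)$ through the induced map $I$, and that in each of the two cases of the hypothesis one of the two descended maps $\overline{t_i}$, $\overline{q_{jk}}$ is the identity.

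First I would record an \emph{$I$-equivariance} principle: for any word $w$ in the Bender--Knuth generators $t_m$, the linear extension $I(w(f))$ of $D_\lambda$ depends only on $I(f)$. Indeed, by Lemma~\ref{lem:t_i-fixes} the only generators touching the labels of $D_\lambda$ are $t_{p+1},\ldots,t_{p+n-1}$, and for such a $t_m$ both labels $m$ and $m+1$ lie in $D_\lambda$; since two elements of $D_\lambda$ are comparable in $R = P\oplus D_\lambda\oplus Q$ exactly when they are comparable in $D_\lambda$, whether $t_m$ swaps these labels and the effect of that swap on $D_\lambda$ are determined by $I(f)$ alone. Hence there is a well-defined map $\overline{w}\colon\mathcal{L}(D_\lambda)\to\mathcal{L}(D_\lambda)$ with $I\circ w = \overline{w}\circ I$, and $\overline{w_1w_2} = \overline{w_1}\,\overline{w_2}$. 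In particular $(t_iq_{jk})^2$ induces $(\overline{t_i}\,\overline{q_{jk}})^2$ on $\mathcal{L}(D_\lambda)$, and from $t_i^2 = 1$, $q_{k-1}^2 = 1$ (Proposition~\ref{prop:eva-involution}), and $q_{jk}^2 = 1$ (Corollary~\ref{cor:qjk-involution}) we obtain $\overline{t_i}^2 = \overline{q_{k-1}}^2 = \overline{q_{jk}}^2 = \mathrm{id}$. By Lemma~\ref{lem:neck-iff} it then suffices to prove $(\overline{t_i}\,\overline{q_{jk}})^2 = \mathrm{id}$.

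Now I split into the two cases. If $i\le p$, then $t_i$ only moves labels $\le p+1$, and since $p$ and $p+1$ label comparable elements of $P\oplus D_\lambda$ it never touches $D_\lambda$ at all; thus $\overline{t_i} = \mathrm{id}$ and $(\overline{t_i}\,\overline{q_{jk}})^2 = \overline{q_{jk}}^2 = \mathrm{id}$. If instead $k-j\le p$, then $q_{k-j} = t_1(t_2t_1)\ldots(t_{k-j}t_{k-j-1}\ldots t_1)$ is a word in the generators $t_1,\ldots,t_{k-j}$, all of index $\le p$, none of which touches $D_\lambda$ by Lemma~\ref{lem:t_i-fixes}; hence $\overline{q_{k-j}} = \mathrm{id}$, so $\overline{q_{jk}} = \overline{q_{k-1}}\,\overline{q_{k-j}}\,\overline{q_{k-1}} = \overline{q_{k-1}}^2 = \mathrm{id}$, and therefore $(\overline{t_i}\,\overline{q_{jk}})^2 = \overline{t_i}^2 = \mathrm{id}$. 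In both cases $(t_iq_{jk})^2$ acts as the identity on $\mathcal{L}(D_\lambda)$, so it fixes the labels of $D_\lambda$ for every linear extension of $R$.

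I do not anticipate a real obstacle here. The only point that needs care is the $I$-equivariance principle, which is what makes ``$\overline{t_i}=\mathrm{id}$'' and ``$\overline{q_{k-j}}=\mathrm{id}$'' legitimate; combined with the elementary observation that $q_{k-j}$ is built solely from the low-index generators $t_1,\ldots,t_{k-j}$, the rest is just bookkeeping with the involutivity of evacuation and of the Bender--Knuth moves. (One could alternatively phrase the whole argument on ordered set partitions using Lemmas~\ref{lem:q_k-neck} and~\ref{cor:q_jk-neck}, but the involution-based argument above avoids any explicit permutation computation.)
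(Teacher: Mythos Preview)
Your proof is correct and follows essentially the same approach as the paper: in the first case $t_i$ acts trivially on $D_\lambda$ so only $q_{jk}^2=1$ matters, and in the second case $q_{k-j}$ acts trivially so $q_{jk}$ reduces to $q_{k-1}^2=1$ on $D_\lambda$. Your explicit formulation of the $I$-equivariance principle makes precise a point the paper leaves implicit (it is what justifies collapsing $q_{k-1}q_{k-j}q_{k-1}$ to $q_{k-1}^2$ on the $D_\lambda$ labels), though your appeal to Lemma~\ref{lem:neck-iff} is unnecessary since $(\overline{t_i}\,\overline{q_{jk}})^2=\mathrm{id}$ already gives the conclusion directly from the definition of $\overline{w}$.
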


    \begin{proof}
        If $i\leq p$, then $t_i$ does not affect the labels in $D_\lambda$, so the only action that affects these labels is $q_{jk}^2$, which is the identity by Corollary \ref{cor:qjk-involution}.

        If $k-j\leq p$, then we claim that $q_{jk}$ fixes the labels in $D_\lambda$. Since $k-j\leq p$, only $q_{k-1}^2$ affects the labels in $D_\lambda$. Fortunately, $q_{k-1}^2$ is also the identity by Proposition \ref{prop:eva-involution}.
    \end{proof}

    Lemma \ref{lem:bound} implies condition 1 of Proposition \ref{thm:main-thm}.

    \begin{corollary}\label{cor:cond1}
        If $p>q+n-4$, then $(t_iq_{jk})^2$ fixes the labels in $D_\lambda$ for all $i+1<j<k$.
    \end{corollary}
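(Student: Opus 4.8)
The plan is to derive this immediately from Lemma~\ref{lem:bound}, which already guarantees that $(t_iq_{jk})^2$ fixes the labels in $D_\lambda$ whenever $i \le p$ or $k-j \le p$. So the whole content of the corollary is a short counting argument: I want to show that when $p > q+n-4$, no triple $i+1<j<k$ admissible in the cactus relation for $R$ can have $i \ge p+1$ and $k-j \ge p+1$ simultaneously, i.e.\ every admissible triple falls into one of the two cases handled by Lemma~\ref{lem:bound}.

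First I would pin down the range of the indices. Since $R = P \oplus D_\lambda \oplus Q$ has $|R| = p+n+q$ elements, the operator $q_{jk} = q_{k-1}q_{k-j}q_{k-1}$ is only defined for $k \le |R| = p+n+q$, and this is the only external constraint I need. Then I would argue by contradiction: suppose some triple with $i+1<j<k$ has $i \ge p+1$ and $k-j \ge p+1$. From $i \ge p+1$ and $j \ge i+2$ we get $j \ge p+3$, and adding $k-j \ge p+1$ gives $k = (k-j)+j \ge 2p+4$. Combining with $k \le p+n+q$ forces $2p+4 \le p+n+q$, i.e.\ $p \le q+n-4$, contradicting the hypothesis. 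Hence every admissible triple satisfies $i \le p$ or $k-j \le p$, and Lemma~\ref{lem:bound} finishes the argument.

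I do not expect any serious obstacle here; this is essentially a pigeonhole bound exploiting that $D_\lambda$ occupies only the middle $n$ positions $p+1,\dots,p+n$ of $R$. The only points needing care are using the correct upper bound $k \le |R| = p+n+q$ (rather than $|R|-1$), and noting that it is precisely the strict chain $i+1<j<k$ — a spacing of at least $3$ between the $j$-sized piece and the value $i$ — that produces the constant $4$ in the inequality $p > q+n-4$; a weaker spacing would yield a correspondingly weaker bound.
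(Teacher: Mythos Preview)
Your proposal is correct and follows essentially the same approach as the paper's proof: both invoke Lemma~\ref{lem:bound} and use the bounds $j\ge p+3$ (from $i\ge p+1$ and $i+1<j$) together with $k\le |R|=p+n+q$ to force $k-j\le p$. The only cosmetic difference is that you phrase it as a contradiction while the paper does a direct case split on whether $i\le p$ or $i\ge p+1$.
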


    \begin{proof}
        If $p>q+n-4$, then $2p+4>p+n+q$. By Lemma \ref{lem:bound}, if $i\leq p$, then the statement is true. If $i\geq p+1$, then $j\geq p+3$, and since $k\leq |R| = p+n+q < 2p+4$, $k-j<p+1$. Hence, the statement is also true by Lemma \ref{lem:bound}.
    \end{proof}

    Now we prove condition 2 of Proposition \ref{thm:main-thm}.

    \begin{lemma}\label{lem:cond2}
        If $p=q+n-4$, then $(t_iq_{jk})^2$ fixes the labels in $D_\lambda$ for all $i+1<j<k$ if and only if $q~\text{mod}~n\neq 1,3$.
    \end{lemma}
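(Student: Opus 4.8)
The plan is to reduce everything to a concrete computation on the symmetric group acting on $n$ symbols, using the framework built in Lemmas~\ref{lem:neck-iff}--\ref{lem:q_k-neck} and Corollary~\ref{cor:q_jk-neck}. By Lemma~\ref{lem:neck-iff} it suffices to determine exactly when $(t_iq_{jk})^2$, viewed as a permutation of $\{1,\dots,n\}$ acting on ordered set partitions, equals the identity. By Lemma~\ref{lem:bound} we may assume $i\geq p+1$ and $k-j\geq p+1$, so write $a := i-p\in\{1,\dots,n-1\}$; then $t_i$ acts as the transposition $(a\ \ a{+}1)$ on $\{1,\dots,n\}$. Since $p=q+n-4$, we have $|R|=p+n+q=2p+4$, so $k\leq 2p+4$, i.e. $k=p+n+r$ with $r\leq q = p-n+4$; combined with $k-j\geq p+1$ and $j>i+1\geq p+3$ one pins down the narrow range of admissible $(r,\ell)$, where $\ell := k-j-p$. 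Corollary~\ref{cor:q_jk-neck} then gives $q_{jk}$ explicitly as the ``interval flip'' reversing $\{m-\ell,\dots,m\}$ (mod $n$), with $m=r\bmod n$.

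Next I would carry out the case analysis on the relative position of the two intervals being flipped: $t_i$ flips the $2$-element interval $\{a,a+1\}$ and $q_{jk}$ flips the $(\ell+1)$-element interval $\{m-\ell,\dots,m\}$ (cyclically). Two involutions that are interval reversals commute if and only if the intervals are disjoint, nested, or equal (the standard fact about reversals — I'd state and quickly verify it, or cite that $t_i$ and $q_{jk}$ commute iff they do so as reversals on the linear order $1,\dots,n$, being careful about the cyclic wraparound since $m-\ell$ may be negative mod $n$). Because $\{a,a+1\}$ has only two elements, ``nested'' means $\{a,a+1\}\subseteq\{m-\ell,\dots,m\}$ with the $2$-interval sitting symmetrically inside — but a $2$-element subinterval of a reversed block is fixed setwise by the reversal only when it is centered, which for a block of length $\ell+1$ forces $\ell$ odd and a specific value of $a$. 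So the commuting cases are: (i) the intervals are disjoint mod $n$; (ii) $\{a,a+1\}$ is the centrally-placed pair inside the flipped block. One then checks that as $r$ ranges over the admissible values and $a$ over $\{1,\dots,n-1\}$, a ``bad'' (non-commuting) configuration exists precisely when $m=r\bmod n$ — equivalently $q\bmod n$, since $r=p-n+4=q$ here, wait: $r$ ranges up to $q$ but we need the value $k=|R|$ giving $r=q$, so $m=q\bmod n$ — lands in $\{1,3\}$, these being exactly the residues for which the forced symmetric placement fails or, dually, for which a genuine overlap is unavoidable; the excluded residues $1$ and $3$ come from the short intervals $\ell=1$ (length-$2$ flip, needing $\ell$ even fails) and the boundary interaction at the block endpoints.

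Concretely, the two directions: for the ``if'' direction (assuming $q\bmod n\neq 1,3$, show all $(t_iq_{jk})^2$ fix $D_\lambda$) I would run through each admissible $(r,\ell,a)$ and exhibit either disjointness or the central-placement symmetry, using Corollary~\ref{cor:q_jk-neck}'s explicit one-line permutation and composing it with $(a\ a{+}1)$ twice — a finite, bounded check once the admissible range is nailed down. For the ``only if'' direction (assuming $q\bmod n\in\{1,3\}$, produce a failing relation) I would take $k=|R|=2p+4$ so $r=q$, $m=q\bmod n\in\{1,3\}$, choose $\ell$ and $a$ (e.g. $\ell=1$, and $a$ adjacent to but not centered on $\{m-1,m\}$) so that $t_i$ flips a pair straddling the boundary of the flipped block, compute $(t_iq_{jk})^2$ as a product of three transpositions on $\{1,\dots,n\}$ and observe it is a nontrivial $3$-cycle (or double transposition), then invoke Lemma~\ref{lem:neck-iff} to convert this into an actual linear extension of $R$ not fixed by the relation.

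The main obstacle will be the bookkeeping in the ``only if'' direction: one must produce, for both residues $1$ and $3$, a valid choice of $(i,j,k)$ satisfying $i+1<j<k\leq|R|$ together with $i>p$ and $k-j>p$ (so that the move is not killed by Lemma~\ref{lem:bound}), and then verify the resulting permutation of $\{1,\dots,n\}$ is genuinely nontrivial — the cyclic reduction mod $n$ in Corollary~\ref{cor:q_jk-neck} makes it easy to misplace an index by one, and the endpoint $m-\ell$ wrapping past $1$ has to be handled with care. Organizing the admissible $(r,\ell,a)$ triples into a small table at the outset, and separating ``disjoint'', ``centered'', and ``boundary-straddling'' configurations cleanly, is what makes the rest routine.
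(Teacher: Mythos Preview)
Your overall strategy --- reduce to permutations on $\{1,\dots,n\}$ via Lemma~\ref{lem:neck-iff}, use Lemma~\ref{lem:bound} to restrict to $i>p$ and $k-j>p$, then invoke Corollary~\ref{cor:q_jk-neck} --- is exactly right and matches the paper. But you are missing the single observation that collapses the whole argument: the constraints $i+1<j<k\le|R|=2p+4$ together with $i\ge p+1$ and $k-j\ge p+1$ force the \emph{unique} triple $(i,j,k)=(p+1,\,p+3,\,2p+4)$. Indeed $j\ge i+2\ge p+3$ and $k\ge j+p+1\ge 2p+4$, so $k=2p+4$; then $j\le k-p-1=p+3$ gives $j=p+3$; then $i\le j-2=p+1$ gives $i=p+1$. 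There is no ``narrow range'' of $(r,\ell,a)$ to tabulate --- there is exactly one case: $a=1$, $\ell=k-j-p=1$, and $r=k-p-n=q$, so $m=q\bmod n$.

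With that in hand, $t_i$ acts as the transposition $(12)$ and, by Corollary~\ref{cor:q_jk-neck}, $q_{jk}$ acts as the transposition $(m-1,m)$ (values taken mod $n$). Two transpositions commute iff they are equal or disjoint, which here means $m=2$ or $m\notin\{1,2,3\}$, i.e.\ $m\neq 1,3$. That is the entire proof. Your discussion of general interval-reversal commutation, centered $2$-blocks inside longer flips, and cyclic-wraparound bookkeeping is all unnecessary once you see that $\ell=1$, so both permutations are simply adjacent transpositions. The paper's proof is three sentences for exactly this reason.
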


    \begin{proof}
        The only tuple $(i,j,k)$ such that $i+1<j<k$ and $i,k-j>p$ is $(p+1,p+3,2p+4)$. For this tuple, $k-j = p+1$ and $(k-p-n)~\text{mod}~n = q~\text{mod}~n$. Thus, $t_i$ is equivalent to left multiplication by $(12)$ to the numbers on the ordered set partition, and $q_{jk}$ is equivalent to left multiplication by $(m-1,m)$ where $m = q~\text{mod}~n$ by Corollary \ref{cor:q_jk-neck}. Clearly, $(12)$ and $(m-1,m)$ commute if and only if $m\neq 1,3$.
    \end{proof}

    Finally, we prove condition 3 of Proposition \ref{thm:main-thm}.

    \begin{lemma}\label{lem:cond3}
        If $p=q+n-r$ for $r>4$, then $(t_iq_{jk})^2$ fixes the labels in $D_\lambda$ for all $i+1<j<k$ if and only if $q~\text{mod}~n > r-1$.
    \end{lemma}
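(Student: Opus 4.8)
The plan is to reduce the statement, using the lemmas already in hand, to an elementary question in $\mathfrak{S}_n$ about when a simple transposition commutes with the reversal of a cyclic interval, and then to carry out the resulting bookkeeping mod $n$.

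\emph{Reduction.} By Lemma~\ref{lem:bound} the only triples $(i,j,k)$ with $i+1<j<k$ that can fail to fix the labels of $D_\lambda$ are those with $i>p$ and $k-j>p$, and by Lemma~\ref{lem:t_i-fixes} we may also assume $i<p+n$. Put $i=p+a$, $k-j=p+\ell$ and $k=p+n+s$; then the conditions $i+1<j<k$, $i\le p+n-1$ and $k\le |R|=p+n+q$ translate into $1\le a\le n-1$, $1\le\ell\le n-1$, $0\le s\le q$ and $s\ge a+\ell+q-r+2$. In particular a valid triple forces $a+\ell\le r-2$, hence also $a\le r-3$. Since $(t_iq_{jk})^2=1$ is equivalent to $t_i$ and $q_{jk}$ commuting, Lemma~\ref{lem:neck-iff} reduces us to deciding, for each such triple, whether $t_i$ and $q_{jk}$ commute as permutations of the blocks of the ordered set partition. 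There $t_i$ is the transposition $(a,a+1)$, and by Corollary~\ref{cor:q_jk-neck} $q_{jk}$ is the reversal $\sigma$ of the cyclic interval $J=\{m-\ell,\dots,m\}$ (entries mod $n$), where $m=s\bmod n$.

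\emph{Commuting criterion and the ``if'' direction.} A short case check on the position of $\{a,a+1\}$ relative to $J$ shows that $(a,a+1)$ commutes with $\sigma$ exactly when $\{a,a+1\}\cap J=\emptyset$, or $\{a,a+1\}\subseteq J$ and $\{a,a+1\}$ is the central pair of $J$ (which is possible only when $\ell$ is odd, and when $\ell=1$ means $\{a,a+1\}=J$). Now suppose $q\bmod n>r-1$ and write $\rho=q\bmod n$, so $\rho\ge r$ and thus $n\ge\rho\ge r$. One first checks that $s$ never reaches $0$, so that as $s$ runs over $\{a+\ell+q-r+2,\dots,q\}$ the value $m$ runs over the honest integer interval $\{\rho-r+a+\ell+2,\dots,\rho\}$, and every such $J$ lies in $\{\rho-r+a+2,\dots,\rho\}\subseteq\{1,\dots,n\}$. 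The inequality $a+1<\rho-r+a+2$ is equivalent to $\rho>r-1$, so $\{a,a+1\}$ sits strictly below $J$ for every valid triple; hence $\{a,a+1\}\cap J=\emptyset$ and the criterion gives commuting in all cases.

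\emph{The ``only if'' direction.} Suppose $q\bmod n\le r-1$; I will exhibit one genuinely non-commuting triple, taking $a=\ell=1$. For $a=\ell=1$ the admissible $s$ are $\{\max(0,q-r+4),\dots,q\}$, so $m=s\bmod n$ attains the residues $\rho,\rho-1,\dots$ for a window of length $\min(q+1,r-3)$. Since $r>4$ and $\rho\le r-1$, this window is long enough to reach either $3$ (when $\rho\ge 3$) or $1$ (when $\rho\le 2$); for the corresponding choice of $m\in\{1,3\}$ the interval $J=\{m-1,m\}$ contains exactly one of $1,2$, which is not the central-pair situation, so $(1,2)$ does not commute with $\sigma$. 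This is precisely where $r>4$ is used: for $r=4$ the window has length $1$, only $m=\rho$ is available, and $\rho=2$ (for instance) gives the central-pair case, which is exactly the extra exception appearing in Lemma~\ref{lem:cond2}.

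\emph{Main obstacle.} The difficulty is not in the ideas but in the congruence bookkeeping: tracking exactly which integer windows $m$, $m-\ell$ and $a$ occupy, respecting the paper's convention that $q\bmod n\in\{1,\dots,n\}$, checking the boundary residues (e.g. $\rho\in\{1,2,n\}$ and the regimes $r\le n$ versus $r>n$, as well as the possibility $q=0$), and verifying in the case analysis of the commuting criterion that the central-pair overlap is the only non-disjoint situation that survives.
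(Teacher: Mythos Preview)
Your approach is essentially the paper's: reduce via Lemma~\ref{lem:bound}, Lemma~\ref{lem:neck-iff} and Corollary~\ref{cor:q_jk-neck} to the question of when the transposition $(a,a+1)$ commutes with the reversal of a cyclic interval, then bookkeep. Your parametrization by $(a,\ell,s)$ is cleaner than the paper's case-by-case construction of explicit triples, and your commuting criterion (disjoint, or central pair) is correct and more explicit than what the paper writes down.

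There is one genuine gap, however. The condition $0\le s$ that you list in the reduction is \emph{not} a consequence of the constraints $i+1<j<k\le|R|$; it is equivalent to $k\ge p+n$, which fails precisely when $q<r-4$. In the ``if'' direction with $q\ge 1$ you are safe, since $q\ge\rho\ge r$ forces $s_{\min}=a+\ell+q-r+2\ge 4$, and your claim ``$s$ never reaches $0$'' is then justified. But when $q=0$ (so $\rho=n$, and the hypothesis $\rho>r-1$ just says $r\le n$) every admissible $k$ satisfies $k\le p+n$, i.e.\ $s\le 0$, and Corollary~\ref{cor:q_jk-neck} as stated does not apply. You flag $q=0$ as a boundary case in your final paragraph but do not treat it. The fix is short: for $p<k-1<p+n$ and $p<k-j<p+n$, case~2 of Lemma~\ref{lem:q_k-neck} gives $q_{jk}$ directly as the reversal of $\{j,\dots,k-p\}$, which (setting $m=n+s$) is exactly $\{m-\ell,\dots,m\}$, so Corollary~\ref{cor:q_jk-neck}'s formula extends verbatim to $s\le 0$ and your interval $\{\rho-r+a+2,\dots,\rho\}$ and disjointness argument go through unchanged.
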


    \begin{proof}
        Let $m = q~\text{mod}~n$. First, we consider $r = 5$. In this case, if $m = 1,3$, then for $(i,j,k) = (p+1,p+4,2p+5)$, $t_i$ and $q_{jk}$ do not commute by the same argument as in the proof of Lemma \ref{lem:cond2}. If $m = 2$, then for $(i,j,k) = (p+1,p+3,2p+5)$, $t_i$ is equivalent to left multiplication by $(12)$ to the ordered set partition, and $q_{jk}$ is equivalent to left multiplication by $\left(\begin{smallmatrix} 
        n & 1 & 2\\
        2 & 1 & n
        \end{smallmatrix}\right)$ by Corollary \ref{cor:q_jk-neck}. Clearly, these two do not commute. The same tuple and argument apply for the case $m = 4$ since $(12)$ and $\left(\begin{smallmatrix} 
        2 & 3 & 4\\
        4 & 3 & 2
        \end{smallmatrix}\right)$ also do not commute.

        Conversely, when $m > 4$, if $i = p+1$, the options for $(j,k)$ are $(p+3,2p+5)$, $(p+3,2p+4)$, and $(p+4,2p+5)$. In any case, $q_{jk}$ only affects $m, m-1, m-2 > 4-2 = 2$ while $t_i$ only affects $1$ and $2$. Thus, $t_i$ and $q_{jk}$ commute. If $i = p+2$, then the only option for $(j,k)$ is $(p+4,2p+5)$, so $q_{jk}$ only affects $m, m-1 > 4-1 = 3$ while $t_i$ only affects $2$ and $3$. Thus, $t_i$ and $q_{jk}$ also commute. This complete the argument for the case $r = 5$.

        The analogous argument applies when $r>5$. If $2<m\leq r-1$, then for $(i,j,k) = (p+1,p+r-m+2,2p+r)$, $t_i$ and $q_{jk}$ do not commute since $(12)$ and $\left(\begin{smallmatrix} 
        2 & 3 & \ldots & m\\
        m & m-1 & \ldots & 2
        \end{smallmatrix}\right)$ do not commute. Similarly, if $m=2$, for $(i,j,k) = (p+1,p+r-2,2p+r)$, $t_i$ and $q_{jk}$ do not commute since $(12)$ and $\left(\begin{smallmatrix} 
        n & 1 & 2\\
        2 & 1 & n
        \end{smallmatrix}\right)$ do not commute. If $m = 1$, for $(i,j,k) = (p+1,p+r-1,2p+r)$, $t_i$ and $q_{jk}$ do not commute since $(12)$ and $(1n)$ do not commute.

        Conversely, when $m > r-1$, if $i = p+\ell$, then $p+\ell+2\leq j < k \leq 2p+r$, so $k-j \leq p+r-\ell-2$. This means that $q_{jk}$ only affects $m,m-1,\ldots,m-r+\ell+2 > (r-1) - r + \ell +2 = \ell+1$ while $t_i$ only affects $\ell$ and $\ell+2$. Thus, $t_i$ and $q_{jk}$ commute. This completes the proof.
    \end{proof}

    Corollary \ref{cor:cond1}, Lemma \ref{lem:cond2}, and Lemma \ref{lem:cond3} together prove Proposition \ref{thm:main-thm}. Now in order to complete the characterization of LE-cactus posets in the family of ordinal sums of disjoint union of chains, we need the conditions for $\mathfrak{D}_1,\mathfrak{D}_2$, and $\mathfrak{D}_3$. The conditions for $\mathfrak{D}_3$ is quite similar to Proposition \ref{thm:main-thm}.

    \begin{prop}\label{prop:D3}
        $(p,3,q)$ is cactus-compatible if and only if
        \begin{enumerate}
            \item $p>q-1$, or
            \item $p = q-1$ and $q~\text{mod}~n\neq 1,3$.
        \end{enumerate}
        In particular, if $p\leq q-1$, $(p,3,q)$ is not cactus-compatible.
    \end{prop}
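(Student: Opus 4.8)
The plan is to mirror the proof of Proposition~\ref{thm:main-thm}, specialized to $n=3$. Since $D_\lambda\in\mathfrak{D}_3$ has three elements, after applying the induced map $I$ every relevant operator becomes a permutation of the three-block ordered set partition, and by Lemma~\ref{lem:neck-iff} it suffices to decide, for each admissible triple $(i,j,k)$ with $i+1<j<k$, whether $(t_iq_{jk})^2$ is the identity permutation on $\{1,2,3\}$.

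First I would cut down the triples that can matter. By Lemma~\ref{lem:bound}, $(t_iq_{jk})^2$ fixes the labels in $D_\lambda$ whenever $i\le p$ or $k-j\le p$, so we may assume $i\ge p+1$ and $k-j\ge p+1$, together with $i+1<j<k\le|R|=p+q+3$. Since $i\ge p+1$ forces $j\ge p+3$, the inequality $k-j\ge p+1$ gives $k\ge 2p+4$; combined with $k\le p+q+3$ this forces $p\le q-1$. Hence if $p>q-1$ there is no admissible triple, so $(p,3,q)$ is cactus-compatible; this is exactly Corollary~\ref{cor:cond1} for $n=3$, and it establishes condition~(1).

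Next I would treat $p=q-1$. Re-running the same bound, $i\ge p+1$ and $2p+4\le k\le p+q+3=2p+4$ force $k=2p+4$, $j=p+3$, $i=p+1$, so the unique admissible triple is $(p+1,p+3,2p+4)$. Under $I$, $t_{p+1}$ acts as the transposition $(1\;2)$, while by Corollary~\ref{cor:q_jk-neck} (writing $k=p+n+r$ with $r=p+1$, $\ell=k-j-p=1$, and $m=r\bmod 3=q\bmod 3$) the operator $q_{jk}$ acts as the transposition $(m-1\;\;m)$, entries reduced mod~$3$ into $\{1,2,3\}$. These commute precisely when $m\ne 1,3$, i.e.\ when $q\bmod 3\ne 1,3$; this is condition~(2). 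For $p\le q-2$ I would instead exhibit a witness of non-commutativity for each residue of $p$ mod~$3$: here both triples $(p+1,p+3,2p+4)$ and $(p+1,p+3,2p+5)$ are admissible (the latter since $2p+5\le p+q+3$). By Corollary~\ref{cor:q_jk-neck} the first gives $q_{jk}=(m-1\;\;m)$ with $m=(p+1)\bmod 3$, and the second gives $q_{jk}=(m'-2\;\;m')$ (fixing $m'-1$) with $m'=(p+2)\bmod 3\equiv m+1$; tested against $t_{p+1}=(1\;2)$, the first fails to commute when $m\in\{1,3\}$ and the second when $m'\in\{2,3\}$, and since $m'\equiv m+1$ at least one of these always holds. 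So $(t_iq_{jk})^2$ moves a label of $D_\lambda$, and by Lemma~\ref{lem:neck-iff} the triple is not cactus-compatible; the ``in particular'' clause then follows from the characterization.

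I do not expect a genuine obstacle: the content is (i) pinning down the short list of admissible triples once $p\le q-1$ is forced, and (ii) the small finite check in the $p\le q-2$ case that, for each value of $p\bmod 3$, one of the two candidate triples yields non-commuting transpositions. The one point to handle carefully is the paper's convention that residues mod $n$ lie in $\{1,\dots,n\}$ rather than $\{0,\dots,n-1\}$ — this is exactly what turns ``$(m-1\;\;m)$ with $m=1$'' into the transposition $(3\;1)$, and hence what makes $q\bmod 3=1$ a non-cactus case.
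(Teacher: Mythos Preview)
Your proposal is correct and follows the same route the paper indicates: the paper's entire proof of Proposition~\ref{prop:D3} is the single sentence ``The proof for Proposition~\ref{prop:D3} is essentially the same as the proofs of Corollary~\ref{cor:cond1} and Lemma~\ref{lem:cond2},'' and your treatment of conditions (1) and (2) is exactly that specialization to $n=3$. For the remaining range $p\le q-2$, which the paper's reference does not literally cover (that would be the analogue of Lemma~\ref{lem:cond3}), you supply a short direct argument using two fixed witness triples rather than Lemma~\ref{lem:cond3}'s residue-dependent choice; this is a minor variation in the same spirit and works as stated.
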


    The proof for Proposition \ref{prop:D3} is essentially the same as the proofs of Corollary \ref{cor:cond1} and Lemma \ref{lem:cond2}. The conditions for $\mathfrak{D}_1$ and $\mathfrak{D}_2$ are even simpler: there is no condition!

    \begin{prop}\label{prop:D1,2}
        For all $p,q$, $(p,1,q)$ and $(p,2,q)$ are cactus-compatible.
    \end{prop}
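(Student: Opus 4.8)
The plan is to split into the cases $n=1$ and $n=2$, reducing each to the elementary fact that $\mathfrak{S}_n$ is abelian for $n\le 2$. For $n=1$ the poset $D_\lambda$ is the one-element poset $C_1$, and by Lemma~\ref{lem:t_i-fixes} the only Bender--Knuth moves that can affect the labels in $D_\lambda$ are $t_{p+1},\dots,t_{p+n-1}$, which for $n=1$ is an empty list. Hence no move touches the single label $p+1$ carried by $D_\lambda$, so $(t_iq_{jk})^2$ fixes it trivially for every $i+1<j<k$, and $(p,1,q)$ is cactus-compatible.

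For $n=2$ we have $D_\lambda = A_2$. I would work through the induced linear extension $I(f)\in\mathcal{L}(D_\lambda)$ and its ordered set partition of $\{1,2\}$, which consists of exactly two singleton blocks. Two observations suffice. First, by Lemma~\ref{lem:q_k-neck}, each factor $q_{k-1}$ of $q_{jk}=q_{k-1}q_{k-j}q_{k-1}$ descends along $I$ to left multiplication by some element of $\mathfrak{S}_n$ (the three cases of that lemma cover every value of $k$), so $q_{jk}$ itself acts on the ordered set partition as left multiplication by an element of $\mathfrak{S}_2$. Second, by Lemma~\ref{lem:t_i-fixes} the only $t_i$ that affects the labels in $D_\lambda$ is $t_{p+1}$, and since the two elements of $A_2$ are incomparable in $R$, $t_{p+1}$ acts on the ordered set partition as left multiplication by $(12)\in\mathfrak{S}_2$. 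Because $\mathfrak{S}_2$ is abelian, these two actions commute, so $(t_iq_{jk})^2$ acts as left multiplication by the identity permutation; by Lemma~\ref{lem:neck-iff} it then fixes the labels in $D_\lambda$, whence $(p,2,q)$ is cactus-compatible.

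The only point requiring care — and it is bookkeeping rather than a genuine obstacle — is to confirm that ``restriction to $D_\lambda$ along $I$'' is a bona fide group action: each $t_i$ and each $q_{k-1}$ sends $I(f)$ to a linear extension of $D_\lambda$ determined by $I(f)$ alone, which is exactly the content of Lemma~\ref{lem:t_i-fixes} and Lemma~\ref{lem:q_k-labels}, so that the restriction of a composite is the composite of the restrictions. Once that is in place, the commutativity of $\mathfrak{S}_{\le 2}$ closes both cases at once. Alternatively, the $n=2$ case can be phrased without ordered set partitions at all: the labels of $D_\lambda$ occupy a $2$-element set of states, $t_{p+1}$ toggles it, $q_{jk}$ induces one of the two self-bijections of that set, and in $\mathbb{Z}/2$ all of these commute; either way, $(t_iq_{jk})^2$ fixes the labels of $D_\lambda$.
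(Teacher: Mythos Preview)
Your proof is correct and in spirit matches the paper's, though the paper's $n=2$ argument is even more minimalist: it observes that, since $t_{p+1}$ is the only move touching the labels in $D_\lambda$ and $t_{p+1}$ occurs an even number of times in the word $(t_iq_{jk})^2$, the net effect on those labels is trivial. This sidesteps Lemmas~\ref{lem:q_k-labels} and~\ref{lem:q_k-neck} entirely, whereas you route through the ordered-set-partition action to conclude that both $t_i$ and $q_{jk}$ land in $\mathfrak{S}_2$ and then use abelianness. Your alternative phrasing at the end (the $\mathbb{Z}/2$ toggle) is essentially the paper's argument; either way the content is the same.
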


    \begin{proof}
        The case when $D_\lambda\in\mathfrak{D}_1$ is trivial since its element can only be labeled by $p+1$. When $D_\lambda\in\mathfrak{D}_2$, the only move that affects the labels in $D_\lambda$ is $t_{p+1}$. Fortunately, $t_{p+1}$ appears an even number of times in $(t_iq_{jk})^2$, so it fixes the labels in $D_\lambda$.
    \end{proof}

    Combining Proposition \ref{thm:main-thm}, Proposition \ref{prop:D3}, and Proposition \ref{prop:D1,2}, we achieve our desired characterization.

    \begin{theorem}\label{thm:char}
        Consider a sequence of positive integers $a_0=0,a_1,\ldots,a_\ell,a_{\ell+1} = 0$ and a sequence of posets $D_{\mu_1},\ldots,D_{\mu_\ell}$ where $D_{\mu_i}\in\mathfrak{D}_{a_i}$. The poset $P = D_{\mu_1}\oplus \ldots\oplus D_{\mu_\ell}$ is LE-cactus if and only if for all $i = 1,2,\ldots,\ell$, the triples
        \[ \left(\sum_{r=0}^{i-1} a_r, \,\, a_i, \,\, \sum_{r=i+1}^{\ell+1} a_r\right) \]
        are cactus-compatible.
    \end{theorem}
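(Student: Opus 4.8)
The plan is to derive Theorem~\ref{thm:char} from Propositions~\ref{thm:main-thm}, \ref{prop:D3} and \ref{prop:D1,2} by localizing the cactus relation to the individual layers $D_{\mu_i}$. The first step is to record that, since $P = D_{\mu_1}\oplus\cdots\oplus D_{\mu_\ell}$ is an ordinal sum, every linear extension $f$ of $P$ carries $D_{\mu_i}$ bijectively onto the block of positions $\bigl\{\,1+\sum_{r=1}^{i-1}a_r,\ \ldots,\ \sum_{r=1}^{i}a_r\,\bigr\}$, and this block does not depend on $f$. Consequently a permutation of the labels of $f$ is trivial exactly when it fixes the label of every element, i.e.\ exactly when it fixes the labels of the elements of each layer $D_{\mu_i}$ separately. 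Hence $P$ is LE-cactus --- meaning $(t_{i'}q_{jk})^2$ acts as the identity on $\mathcal{L}(P)$ for all $i'+1<j<k$ --- if and only if, for every $i\in\{1,\ldots,\ell\}$ and every $i'+1<j<k$, the operator $(t_{i'}q_{jk})^2$ fixes the labels of the elements of $D_{\mu_i}$ on every linear extension of $P$.

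The second step is to fix $i$, decompose $P = P_{<i}\oplus D_{\mu_i}\oplus P_{>i}$ with $P_{<i}=D_{\mu_1}\oplus\cdots\oplus D_{\mu_{i-1}}$ and $P_{>i}=D_{\mu_{i+1}}\oplus\cdots\oplus D_{\mu_\ell}$ (either possibly empty), note that $|P_{<i}| = \sum_{r=0}^{i-1}a_r$, $|D_{\mu_i}| = a_i$ and $|P_{>i}| = \sum_{r=i+1}^{\ell+1}a_r$, and show that the layer condition of Step~1 for this $i$ is equivalent to cactus-compatibility of the triple $\bigl(\sum_{r=0}^{i-1}a_r,\,a_i,\,\sum_{r=i+1}^{\ell+1}a_r\bigr)$. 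One direction is immediate: cactus-compatibility of this triple, applied via Definition~\ref{def:cactus-compatible} with the specific factors $P_{<i}$, $D_{\mu_i}$, $P_{>i}$, gives precisely the layer condition. For the converse I would invoke the structural fact, already essentially contained in Section~\ref{sec:pro-eva}, that the way $(t_{i'}q_{jk})^2$ moves the labels of the middle factor of an ordinal sum $P'\oplus D_\lambda\oplus Q'$ depends only on the cardinalities $(|P'|,|D_\lambda|,|Q'|)$ and on $i',j,k$, never on the internal structure of $P'$, $D_\lambda$ or $Q'$: Lemmas~\ref{lem:t_i-fixes} and \ref{lem:q_k-labels} transport the action of $t_{i'}$ and of $q_{jk}$ on the labels of $D_\lambda$, through the induced map $I$, to composites of promotions and evacuations on linear extensions of $D_\lambda$ whose indices are functions of $|P'|$ and of $i',j,k$; Lemmas~\ref{lem:q_k-neck} and \ref{cor:q_jk-neck} then turn $(t_{i'}q_{jk})^2$ into left multiplication on the associated ordered set partition by a permutation $w \in \mathfrak{S}_{|D_\lambda|}$ depending on those numbers alone; and Lemma~\ref{lem:neck-iff} says that $(t_{i'}q_{jk})^2$ fixes the labels of $D_\lambda$ on all linear extensions if and only if $w = \mathrm{id}$ (automatically true when $|D_\lambda| = 1$). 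Therefore the layer condition for $D_{\mu_i}$ holds if and only if $w\bigl(\sum_{r=0}^{i-1}a_r,\,a_i,\,\sum_{r=i+1}^{\ell+1}a_r;\,i',j,k\bigr) = \mathrm{id}$ for all $i'+1<j<k$, which is exactly cactus-compatibility of the triple. Combining this with Step~1 yields the stated characterization.

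I expect the one point needing real care to be the structural-independence claim in Step~2: that the orbit of the middle layer under $(t_{i'}q_{jk})^2$ is controlled purely by the three cardinalities. Making it precise amounts to verifying that none of the reductions in Section~\ref{sec:pro-eva} --- Lemmas~\ref{lem:t_i-fixes}, \ref{lem:q_k-labels}, \ref{lem:q_k-neck}, \ref{cor:q_jk-neck} --- nor the argument of Lemma~\ref{lem:neck-iff}, ever uses more about $P'$, $Q'$ or $\lambda$ than their sizes, together with the fact that a poset in $\mathfrak{D}_n$ always has at least two blocks when $n>1$ (so that the separation construction in the proof of Lemma~\ref{lem:neck-iff} is available) and exactly one block when $n=1$. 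With that in hand the theorem is immediate; the remainder is purely formal bookkeeping of the label sets of the layers of the ordinal sum.
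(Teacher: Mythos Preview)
Your proposal is correct and follows the same localization strategy that the paper leaves implicit after stating the three propositions: the key point is that, via Lemmas~\ref{lem:t_i-fixes}--\ref{lem:q_k-neck}, Corollary~\ref{cor:q_jk-neck} and Lemma~\ref{lem:neck-iff}, the action of $(t_{i'}q_{jk})^2$ on the labels of the middle factor of an ordinal sum is a permutation in $\mathfrak{S}_n$ depending only on the three cardinalities and on $(i',j,k)$, so the LE-cactus condition decomposes layer by layer into cactus-compatibility of each triple. One minor remark on framing: Propositions~\ref{thm:main-thm}, \ref{prop:D3} and \ref{prop:D1,2} are not actually used in your argument (nor are they needed for Theorem~\ref{thm:char} as stated)---they characterize cactus-compatibility explicitly, whereas the theorem is purely the reduction to cactus-compatibility, which your Step~2 handles directly from the structural lemmas.
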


    The following corollary is immediate.

    \begin{corollary}\label{cor:D-cactus}
        For all $i$, all $D_\mu\in \mathfrak{D}_i$ are LE-cactus. 
    \end{corollary}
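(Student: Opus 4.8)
The plan is to obtain this as a direct specialization of Theorem \ref{thm:char} to the case $\ell = 1$. Given $D_\mu \in \mathfrak{D}_i$, I would set $a_1 = i$ with the convention $a_0 = a_2 = 0$, so that in the notation of Theorem \ref{thm:char} the poset $P = D_{\mu_1}$ is exactly $D_\mu$. The theorem then asserts that $D_\mu$ is LE-cactus if and only if the single triple $\bigl(\sum_{r=0}^{0} a_r,\; a_1,\; \sum_{r=2}^{2} a_r\bigr) = (0,i,0)$ is cactus-compatible. Thus the whole statement reduces to checking that $(0,i,0)$ is cactus-compatible for every $i \geq 1$, which I would do by cases on $i$, invoking Proposition \ref{prop:D1,2}, Proposition \ref{prop:D3}, and Proposition \ref{thm:main-thm}.

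For $i = 1$ and $i = 2$, cactus-compatibility of $(0,1,0)$ and $(0,2,0)$ is immediate from Proposition \ref{prop:D1,2}. For $i = 3$, apply Proposition \ref{prop:D3} with $p = q = 0$: its condition (1), namely $p > q - 1$, reads $0 > -1$ and holds. For $i > 3$, apply Proposition \ref{thm:main-thm} with $p = q = 0$ and $n = i$. Condition (1) there, $p > q + n - 4$, fails as soon as $i \geq 4$, so one must land in boundary condition (2) or (3); this is the one spot where the paper's convention that remainders lie in $\{1,\dots,n\}$ is used, giving $q \bmod n = 0 \bmod i = i$. Concretely, for $i = 4$ we are in case (2), since $p = q + n - 4$ becomes $0 = 0$, and the extra requirement $q \bmod n \notin \{1,3\}$ holds because $q \bmod n = 4$. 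For $i \geq 5$ we are in case (3) with $r = n - (q-p) = i > 4$, and the requirement $q \bmod n > r-1$ becomes $i > i-1$, which holds. Hence $(0,i,0)$ is cactus-compatible in every case, and Theorem \ref{thm:char} yields that $D_\mu$ is LE-cactus.

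I do not anticipate a genuine obstacle: the statement is essentially a bookkeeping consequence of the main characterization. The only place that needs a moment's care is the handling of the small/boundary values $i = 3$ and $i = 4$ — and, implicitly, the $\{1,\dots,n\}$ convention for remainders, without which the count at $i = 4$ would look borderline — while all larger $i$ are covered uniformly by case (3) of Proposition \ref{thm:main-thm}.
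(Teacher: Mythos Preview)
Your proof is correct and is exactly the specialization of Theorem~\ref{thm:char} that the paper intends when it calls the corollary ``immediate''; your case check of $(0,i,0)$ is the natural way to unpack that word. The paper also records, in a remark following the corollary, the alternative route via Theorem~\ref{thm:P+Q-cactus}: chains are trivially LE-cactus, hence so are their disjoint unions.
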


    \begin{remark}
        Corollary \ref{cor:D-cactus} can also be proved using Theorem \ref{thm:P+Q-cactus} from \cite{chiang2023bender} by observing that every chain is LE-cactus, and hence their disjoint unions are also LE-cactus.
    \end{remark}

\section{Discussion}\label{sec:discussion}

    Unfortunately, Theorem \ref{thm:char} does not apply to other posets since for an arbitrary poset, it is not easy to understand the action of promotion and evacuation. Specifically, if a connected component is not a chain, then the set of labels in that component does not uniquely determine the labeling. However, we do have a {\it necessary} condition for a poset to be LE-cactus.

    \begin{prop}\label{prop:checking}
        Let $D$ be a disconnected poset with $|D| = n$, and let $R = P~\oplus~D~\oplus~Q$ for posets $P$ and $Q$ with $|P| = p$ and $|Q| = q$. Then $R$ is LE-cactus only if $(p,n,q)$ satisfies the conditions in Proposition \ref{thm:main-thm} and \ref{prop:D3}.
    \end{prop}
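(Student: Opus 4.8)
The plan is to reduce the LE-cactus condition for $R = P \oplus D \oplus Q$ to the cactus-compatibility of the triple $(p,n,q)$, using the same machinery developed for disjoint unions of chains but now only in the ``forward'' (necessity) direction, which is precisely the easier direction of Lemma~\ref{lem:neck-iff}. Recall that $R$ being LE-cactus means $(t_i q_{jk})^2 = 1$ on $\mathcal{L}(R)$ for all $i+1 < j < k$; in particular $(t_i q_{jk})^2$ must fix the labels of the elements in $D$ for every linear extension $f$ of $R$. So it suffices to show: if $(p,n,q)$ fails the conditions in Proposition~\ref{thm:main-thm} (when $n > 3$) or Proposition~\ref{prop:D3} (when $n=3$), then there is some triple $(i,j,k)$ with $i+1<j<k$ and some linear extension $f$ of $R$ on which $(t_i q_{jk})^2$ moves a label inside $D$.

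First I would observe that Lemma~\ref{lem:t_i-fixes}, Lemma~\ref{lem:q_k-labels}, and Corollary~\ref{cor:q_jk-neck} only use that the elements of $D$ receive labels $\{p+1,\dots,p+n\}$ in every linear extension of an ordinal sum, and that the ``$t_p$ and $t_{p+n}$ do nothing'' argument; none of these uses that $D$ is a disjoint union of chains. So the computation that $(t_i q_{jk})^2$, restricted to the labels of $D$, is given by a specific permutation $w \in \mathfrak{S}_n$ acting on the positions $\{1,\dots,n\}$ goes through verbatim for our disconnected $D$. The proofs of Lemma~\ref{lem:cond2} and Lemma~\ref{lem:cond3} exhibit, in each failing case, an explicit triple $(i,j,k)$ for which this permutation $w$ is a nontrivial transposition or a nontrivial reversal of an interval — in particular $w \neq \mathrm{id}$, so there is an $\ell$ with $w(\ell) \neq \ell$.

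Second, I would use the hypothesis that $D$ is disconnected: it has at least two connected components, hence at least two elements $x, y$ that are incomparable in $D$. Because $w$ (computed above as a product of Bender--Knuth moves among $t_{p+1},\dots,t_{p+n-1}$) is built entirely out of swapping adjacent incomparable labels, and the only structural fact used to realize each swap is incomparability of the two elements involved, I can choose a linear extension $f$ of $R$ whose induced linear extension $I(f)$ of $D$ places the label $\ell$ on one component and the label $w(\ell)$ on a different component — concretely, take $x$ in one component, $y$ in another, assign label $\ell$ (shifted by $p$) to the appropriate element and $w(\ell)$ to an element of the other component, and fill in the rest arbitrarily subject to the component orders. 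Then before applying $(t_i q_{jk})^2$ the component of $x$ contains the label corresponding to $\ell$, and afterward it contains the label corresponding to $w(\ell)$; since these differ, the labels in $D$ are not fixed, so $(t_i q_{jk})^2 \neq 1$ on $\mathcal{L}(R)$ and $R$ is not LE-cactus. Contrapositively, $R$ LE-cactus forces $(p,n,q)$ to satisfy the stated conditions.

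The main obstacle — really the only subtlety — is making sure the chosen bad linear extension $f$ of $R$ actually exists and that the permutation $w$ attached to $(t_i q_{jk})^2$ on the $D$-labels is genuinely the one computed in Corollary~\ref{cor:q_jk-neck}, even though $D$ need not be a disjoint union of chains. For the first point one just needs $D$ disconnected (two incomparable elements) plus the freedom to complete any partial assignment respecting $\leq_D$ to a full linear extension, which is standard. For the second point, the key is that all the reductions in Lemmas~\ref{lem:t_i-fixes}–\ref{lem:q_k-labels} are about which \emph{labels} lie in $D$, not about the internal structure of $D$; I would state this explicitly as the reason those lemmas transfer. Unlike the full characterization, we do not need the converse direction of Lemma~\ref{lem:neck-iff} (``$w = \mathrm{id}$ implies $(t_i q_{jk})^2$ fixes $D$''), which is exactly where connectedness of components would matter — hence this is only a necessary condition, as the statement claims.
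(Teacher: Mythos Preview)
Your proposal is correct and follows essentially the same route as the paper's proof: both observe that the ordered-set-partition computation of Section~\ref{sec:main-thm} (Lemmas~\ref{lem:t_i-fixes}--\ref{lem:q_k-neck} and Corollary~\ref{cor:q_jk-neck}) carries over when the chains of $D_\lambda$ are replaced by the connected components of an arbitrary disconnected $D$, and then use the failure cases in Lemmas~\ref{lem:cond2}--\ref{lem:cond3} to produce a nontrivial $w$ and a witnessing linear extension. The only point worth tightening is your claim that Corollary~\ref{cor:q_jk-neck} uses nothing about $D$: it does implicitly use that $D$ is a disjoint union (via Lemmas~\ref{lem:D_n-promotion}--\ref{lem:D_n-evacuation}, whose proofs work verbatim with ``chain'' replaced by ``connected component''), though not that the pieces are chains---so your argument still goes through under the disconnectedness hypothesis.
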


    \begin{proof}
        Since $D$ is disconnected, we can write $D$ as a disjoint union $D_1 \oplus \ldots \oplus D_\ell$. Since $R$ is LE-cactus, all elements $(t_i q_{jk})^2$ fix the labels in $D$. In particular, the labels in $D_i$ stay in $D_i$ under the action of these elements. By the same ordered set partition argument as in Section \ref{sec:main-thm}, this happens only if $(p,n,q)$ satisfies the conditions in Proposition \ref{thm:main-thm} and \ref{prop:D3}.
    \end{proof}

    Furthermore, in \cite[Proposition 3.18, 3.19, 3.20]{chiang2023bender}, it was proved that if $P$ is LE-cactus then $A_1 \oplus P$ and $A_2 \oplus P$ are LE-cactus, but $A_i\oplus P$ is not for any $i \geq 3$, where $A_i$ is an antichain of $i$ elements. This can be seen from our main theorem: since $A_i\in \mathfrak{D_i}$, and $A_i \oplus P$ does not satisfy the conditions in Proposition \ref{thm:main-thm} and \ref{prop:D3}, the labels in $A_i$ are not fixed by all elements $(t_i q_{jk})^2$. However, these elements $(t_i q_{jk})^2$ do fix the labels in $P$. In general, we have the following proposition.

    \begin{prop}
        Let $P$ be an LE-cactus poset. Consider a sequence of positive integers $a_0=0,a_1,\ldots,a_\ell,a_{\ell+1} = |P|$ and a sequence of posets $D_{\mu_1},\ldots,D_{\mu_\ell}$ where $D_{\mu_i}\in\mathfrak{D}_{a_i}$. The poset $R = D_{\mu_1}\oplus \ldots\oplus D_{\mu_\ell}\oplus P$ is LE-cactus if and only if for all $i = 1,2,\ldots,\ell$, the triples
        \[ \left(\sum_{r=0}^{i-1} a_r, \,\, a_i, \,\, \sum_{r=i+1}^{\ell+1} a_r\right) \]
        are cactus-compatible.
    \end{prop}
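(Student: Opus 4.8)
The plan is to run the proof of Theorem~\ref{thm:char} one more time, supplying a single new ingredient: that the LE-cactus hypothesis on $P$ forces the cactus relations to hold automatically on the labels of $P$, even though $P$ need not be a disjoint union of chains. Set $N=\sum_{r=1}^{\ell}a_r$, so that $|R|=N+|P|$ and, for every linear extension $f$ of $R$, the elements of $P$ receive the labels $N+1,\dots,|R|$ while the elements of $D_{\mu_m}$ receive the labels $\sum_{r=0}^{m-1}a_r+1,\dots,\sum_{r=0}^{m}a_r$. Since a linear extension is recovered from the labels of all of its elements, $R$ is LE-cactus if and only if, for every $i+1<j<k$ and every $f$, the operator $(t_iq_{jk})^2$ fixes, separately, the labels of the elements of each of the blocks $D_{\mu_1},\dots,D_{\mu_\ell}$ and of the block $P$.

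First I would dispose of the blocks $D_{\mu_m}$ exactly as in the proof of Theorem~\ref{thm:char}: viewing $D_{\mu_1}\oplus\cdots\oplus D_{\mu_{m-1}}$ as a bottom poset of size $\sum_{r=0}^{m-1}a_r$ and $D_{\mu_{m+1}}\oplus\cdots\oplus D_{\mu_\ell}\oplus P$ as a top poset of size $\sum_{r=m+1}^{\ell}a_r+|P|=\sum_{r=m+1}^{\ell+1}a_r$ (using $a_{\ell+1}=|P|$), Lemma~\ref{lem:neck-iff} together with the formulas of Lemma~\ref{lem:q_k-neck} and Corollary~\ref{cor:q_jk-neck} show that whether $(t_iq_{jk})^2$ disturbs a label of $D_{\mu_m}$ depends only on those two sizes, on $a_m$, and on $(i,j,k)$, and not on the internal structure of the posets below and above --- in particular, not on the presence of $P$ at the top. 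Hence ``$(t_iq_{jk})^2$ fixes the labels of $D_{\mu_m}$ for all $f$ and all $i+1<j<k$'' is precisely the statement that the triple $\bigl(\sum_{r=0}^{m-1}a_r,\,a_m,\,\sum_{r=m+1}^{\ell+1}a_r\bigr)$ is cactus-compatible.

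The substantive step is the block $P$, where the hypothesis on $P$ is used. I would first prove the analog of Lemma~\ref{lem:q_k-labels} for a poset placed at the \emph{top} of an ordinal sum: writing $J\colon\mathcal L(R)\to\mathcal L(P)$ for the induced map $J(f)(x)=f(x)-N$, one has $J(q_{k-1}(f))=J(f)$ when $k\le N+1$, and $J(q_{k-1}(f))=q_{k-N-1}(J(f))$ when $N+1\le k\le|R|$. Crucially there is no third case, unlike in Lemma~\ref{lem:q_k-labels}: since $P$ sits at the top, $k$ never exceeds $|R|$, so the evacuation never ``wraps around'' $P$. The proof is the same stripping argument: the only BK moves touching the labels of $P$ are $t_{N+1},\dots,t_{N+|P|-1}$, acting on $J(f)$ as $t_1,\dots,t_{|P|-1}$, and after deleting from $q_{k-1}=\partial_1\cdots\partial_k$ the factors $t_1,\dots,t_N$ (which fix those labels, $t_N$ doing nothing since it swaps comparable elements) one is left with $q_{k-N-1}$ acting on $\mathcal L(P)$; likewise $t_i$ fixes the labels of $P$ for $i\le N$ and acts on $J(f)$ as $t_{i-N}$ for $i\ge N+1$. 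A three-way case split then shows $(t_iq_{jk})^2$ always fixes the labels of $P$: if $k\le N+1$ then $t_i$, $q_{k-1}$ and $q_{k-j}$ all fix those labels; if $k\ge N+2$ but $k-j\le N$ then $q_{k-j}$ fixes those labels while the two outer copies of $q_{k-1}$ act on $J(f)$ as the involution $q_{k-N-1}$ (Proposition~\ref{prop:eva-involution}), so $q_{jk}$ fixes those labels and $(t_iq_{jk})^2$ acts there as $t_i^2=1$; and if $k\ge N+2$ and $k-j\ge N+1$, then with $K:=k-N\le|P|$ the operator $q_{jk}$ acts on $J(f)$ as $q_{K-1}q_{K-j}q_{K-1}=q_{jK}$, a legitimate operator on $\mathcal L(P)$ since $k-j\ge N+1$ forces $j\le K-1<K\le|P|$ --- whence $(t_iq_{jk})^2$ acts on $J(f)$ as $q_{jK}^2=1$ when $i\le N$ (Corollary~\ref{cor:qjk-involution}) and as $(t_{i-N}q_{jK})^2$ when $i\ge N+1$, which is the identity on $\mathcal L(P)$ because $i-N\ge1$, $(i-N)+1\le i+1<j<K\le|P|$, and $P$ is LE-cactus. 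Combining the three steps, $R$ is LE-cactus exactly when each of the $\ell$ triples is cactus-compatible.

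I expect the last case of that split --- $k-j\ge N+1$, where the evacuation $q_{k-j}$ reaches past $N$ into $P$ --- to be the main obstacle: one must recognize the residual action of $q_{jk}$ on $\mathcal L(P)$ as exactly a $q_{jK}$, and then verify that the shifted indices still satisfy $(i-N)+1<j<K\le|P|$ so that the LE-cactus property of $P$ applies. Everything else --- the ``no third case'' analog of Lemma~\ref{lem:q_k-labels} and the two easy branches of the split --- is routine, and the reduction for the $\mathfrak{D}_n$-blocks is already contained in the proof of Theorem~\ref{thm:char}.
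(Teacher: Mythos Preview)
Your proposal is correct and follows essentially the same approach as the paper's own proof: reduce the $D_{\mu_m}$-blocks to cactus-compatibility via the machinery of Section~\ref{sec:main-thm}, and for the $P$-block show that $(t_iq_{jk})^2$ always fixes the labels of $P$ by stripping off the first $N$ BK moves and recognizing the residual action on $J(f)$ as $(t_{i-N}q_{j,k-N})^2$, which is the identity because $P$ is LE-cactus. Your three-way case split on $k$ and $k-j$ is a slightly finer reorganization of the paper's two-way split on whether $i\le N$ or $k-j\le N$, but the content is the same.
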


    \begin{proof}
        We already know that for $r=1,2,\ldots,\ell$, the elements $(t_i q_{jk})^2$ fixed the labels in $D_{\mu_r}$ if and only if $a_r$ satisfies the conditions in Proposition \ref{thm:main-thm} and \ref{prop:D3}. Thus, it suffices to prove that the labels in $P$ are always fixed by the elements $(t_i q_{jk})^2$.

        Let $m = a_1+\ldots+a_\ell$, by similar reasoning to the arguments above, if $i\leq m$ or $k-j\leq m$, then $(t_iq_{jk})^2$ fixes the labels in $P$. If $i, k-j > m$, then on the labels in $P$, $t_i$ is equivalent to $t_{i-m}$ and $q_{jk}$ is equivalent to $q_{k-1-m}q_{k-j-m}q_{k-1-m}$, which is $q_{k-m,j}$. Thus, $(t_iq_{jk})^2$ is equivalent to $(t_{i-m}q_{k-m,j})^2$. Since $i+1<j$, $i-m+1<j$, and since $k-j>m$, $k-m>j$. Thus we do have the condition $i-m+1<j<k-m$, so this element $(t_{i-m}q_{k-m,j})^2$ is actually acting on linear extensions of $P$. Since $P$ is LE-cactus, this element fixes the labels in $P$.
    \end{proof}

    Therefore, this gives hope for an analogue of Proposition \ref{thm:main-thm} for LE-cactus poset: Let $D$ be an LE-cactus poset, and $R = P \oplus D \oplus Q$ for some finite posets $P$ and $Q$. Are there any conditions on $|P|, |D|, |Q|$ that characterize when the labels in $D$ are fixed by all elements $(t_i q_{ij})^2)$? Corollary \ref{cor:D-cactus} says that all disjoint union of chains are LE-cactus, so we expect an answer to this question to be a generalization of Proposition \ref{thm:main-thm}. This would give a clearer insight into the behavior of LE-cactus posets under the ordinal sum operation.


\bibliography{bibliography}
\bibliographystyle{alpha}

\end{document}